\title[An abelian quotient of the symplectic derivation Lie algebra of the free Lie algebra]
{An abelian quotient of the symplectic derivation Lie algebra of the free Lie algebra}
\author{Shigeyuki Morita}
\address{Graduate School of Mathematical Sciences, 
The University of Tokyo, 
3-8-1 Komaba, 
Meguro-ku, Tokyo, 153-8914, Japan}
\email{morita@ms.u-tokyo.ac.jp}
\author{Takuya Sakasai}
\address{Graduate School of Mathematical Sciences, 
The University of Tokyo, 
3-8-1 Komaba, 
Meguro-ku, Tokyo, 153-8914, Japan}
\email{sakasai@ms.u-tokyo.ac.jp}
\author{Masaaki Suzuki}
\address{Department of Frontier Media Science, 
Meiji University, 
4-21-1 Nakano, Nakano-ku, Tokyo, 164-8525, Japan}
\email{macky@fms.meiji.ac.jp}
\subjclass[2000]{Primary~20F28 , Secondary~20J06; 17B40}
\keywords{symplectic derivations, outer automorphism group, free group}
\newtheorem{thm}{Theorem}[section]
\theoremstyle{definition}
\newtheorem{remark}[thm]{Remark}
\newcommand{\hg}{\mathfrak{h}_{g,1}}
\newcommand{\hinf}{\mathfrak{h}_{\infty,1}}
\newcommand{\Symp}[1]{\mathrm{Sp}(2g,\mathbb{#1})}
\newcommand{\symp}[1]{\mathfrak{sp}(2g,\mathbb{#1})}
\newcommand{\Ker}{\mathop{\mathrm{Ker}}\nolimits}
\newcommand{\Hom}{\mathop{\mathrm{Hom}}\nolimits}
\renewcommand{\Im}{\mathop{\mathrm{Im}}\nolimits}
\newcommand{\Out}{\mathop{\mathrm{Out}}\nolimits}
\newcommand{\Aut}{\mathop{\mathrm{Aut}}\nolimits}
\newcommand{\Q}{\mathbb{Q}}
\newcounter{fig}
\begin{document}

\maketitle

\begin{abstract}
We construct an abelian quotient of the symplectic derivation Lie algebra $\hg$ of the free Lie algebra 
generated by the fundamental representation of $\Symp{Q}$. 
More specifically, we show that the weight $12$ part of the abelianization of $\hg$ is 
$1$-dimensional for $g \ge 8$. The computation is done with the aid of computers. 
\end{abstract}

\section{Introduction}\label{sec:intro}

Let $H$ be the fundamental representation over $\mathbb{Q}$ 
of the symplectic group $\mathrm{Sp} (2g,\mathbb{Q})$. 
Topologically, the vector space $H$ is the first rational homology group 
of a compact connected oriented surface of genus $g$ with one boundary component. 
In this paper, we concern with the symplectic derivation Lie algebra $\hg$ 
of the free Lie algebra $\mathcal{L} (H)$ generated by $H$. 
The Lie algebra $\hinf:=\displaystyle\lim_{g \to \infty} \hg$ obtained by the stabilization 
is just the {\it Lie case} of the three infinite-dimensional graded Lie algebras considered 
by Kontsevich in \cite{kontsevich1, kontsevich2}. In these papers, he proved that 
the Lie algebra homology of $\hinf$ is isomorphic to the free graded commutative algebra
generated by the stable homology of the Lie algebra $\mathfrak{sp}(2h,\Q)$ of $\mathrm{Sp} (2h,\mathbb{Q})$ 
and the totality of the cohomology of 
the outer automorphism groups $\mathrm{Out}\,F_n$ of free groups of rank $n \ge 2$. 

In general, computing (co)homology groups of $\mathrm{Out}\,F_n$ has been 
a difficult problem. More specifically, although the theory of outer spaces due to 
Culler and Vogtmann \cite{cuv} gives a $(2n-3)$-dimensional finite cell 
complex which computes the rational (co)homology groups of $\mathrm{Out}\,F_n$ for any fixed $n$, 
the number of cells grows very fast comparing with increasing $n$. 
$H_\ast (\mathrm{Out}\,F_n; \mathbb{Q})$ were 
determined by Hatcher and Vogtmann \cite{hv} for $n \le 4$ (together with general computational results), 
Gerlits \cite{gerlits} for $n =5$, and Ohashi \cite{ohashi} for $n=6$. 
On the other hand, in our previous papers \cite{mss3, mss4}, 
we computed the integral Euler characteristics 
\[e(\mathrm{Out}\, F_{n})=
\displaystyle\sum_{i=0}^{2n-3} (-1)^i \dim 
\left(H_i (\mathrm{Out}\, F_n; \mathbb{Q})\right)\] 
of $\mathrm{Out}\, F_n$ up to $n \le 11$: 

\begin{center}
\begin{tabular}{|c||c|c|c|c|c|c|c|c|c|c|}
\hline
$n$ & $2$ & $3$ & $4$ & $5$ & $6$ & $7$ & $8$ & $9$ & $10$ & $11$\\
\hline
$e(\mathrm{Out}\, F_n)$ & 
$1$ & $1$ & $2$ & $1$ & $2$ & $1$ & $1$ & $-21$ & $-124$ & $-1202$\\
\hline
\end{tabular}
\end{center}
This result shows the existence of many non-trivial {\it odd}-dimensional rational
(co)homology classes of $\mathrm{Out}\, F_{n}$. 
Note that almost all of the above results were obtained with the aid of computers. 

Recently, Bartholdi \cite{bar} computed the rational homology of $\mathrm{Out}\, F_7$. 
The computation was also aided by computers. 
After a big calculation, he obtained 
\[H_i (\mathrm{Out}\, F_7;\mathbb{Q})\cong \begin{cases}
\mathbb{Q} & (i=0,8,11)\\
0 & (\text{otherwise})\end{cases}.\]
In this result, $H_{11} (\mathrm{Out}\, F_7;\mathbb{Q})\cong \mathbb{Q}$  is remarkable 
because it is the first non-trivial {\it odd} and {\it $($virtually$)$ top} rational homology group 
which is explicitly described. 
Here recall that the complex constructed by Culler and Vogtmann is  $(2n-3)$-dimensional, which is known to give 
the virtual cohomological dimension of $\mathrm{Out}\, F_n$. 
It is a sharp contrast with the fact that the virtually top rational homology groups vanish 
for related groups such as $\mathrm{GL}(n,\mathbb{Z})$ and $\mathrm{SL}(n,\mathbb{Z})$ 
(see Lee-Szczarba \cite{ls}), 
and the mapping class groups of surfaces of genus $g \ge 2$ with at most one puncture 
(see our paper \cite{mss1} or Church-Farb-Putman \cite{cfp}, 
see also Conant-Kassabov-Vogtmann \cite{ckv}).  

By applying Kontsevich's theorem to the above fact that $H_{11} (\mathrm{Out}\, F_7;\mathbb{Q})\cong \mathbb{Q}$, 
we have $H_1 (\hinf)_{12} \cong \mathbb{Q}$, where $H_1 (\hinf)_{12}$ is the weight $12$ part of 
the abelianization of the graded Lie algebra $\hinf$ (see Section \ref{sec:derivation} for details). 
The purpose of the present paper is to give an explicit description of 
this isomorphism purely in terms of the Lie algebra $\hinf$. Consequently, we give an alternative proof of 
$H_{11} (\mathrm{Out}\, F_7;\mathbb{Q})\cong \mathbb{Q}$ by a different method. 
Our computation is also aided by computers. In general, it is desirable for a computer-aided result 
to be checked by multiple methods. 

Our computation of $H_1 (\hinf)_{12}$ is given by calculating 
$H_1 (\hg)_{12}$ for sufficiently large $g$. We construct an $\Symp{Q}$-invariant 
cocycle 
\[C: \hg (12) \longrightarrow \mathbb{Q}\]
and show that it is non-trivial for all $g \ge 2$. 
This means that $H_1 (\hg)_{12}$ is non-trivial even for $g$ in the unstable range and 
we finally show that $C$ gives an isomorphism $H_1 (\hg)_{12} \cong \mathbb{Q}$ for $g \ge 8$. 
Note that the non-triviality of $H_1 (\hg)_{12}$ in the unstable range 
was unknown, 
and in the paper \cite{ms} by Gw\'ena\"el Massuyeau and the second named author, 
we give a topological application of this fact. 
The full description for $C$ is put in Appendix.  
The authors are trying to understand the meaning of our cocycle $C$ with a hope to generalize it 
in higher weights (see Section \ref{sec:rem}), 
although it seems difficult because the cocycle is very big and complicated.

{\it Acknowledgement} 
The authors would like to thank Gw\'ena\"el Massuyeau for 
helpful comments on the first draft of this paper. 
They are also grateful to Karen Vogtmann and 
James Conant for fruitful discussions. In particular, 
James Conant informed that he had checked the result 
$H_{11} (\Out F_7;\mathbb{Q}) \cong \mathbb{Q}$ by a different method from 
Bartholdi's and the authors'. 
The authors were partially supported by KAKENHI 
(No.~15H03618, No.~15H03619, No.~16H03931, and No.~16K05159), 
Japan Society for the Promotion of Science, Japan.

\section{The symplectic derivation Lie algebra of the free Lie algebra}\label{sec:derivation}

The fundamental representation $H$ of $\mathrm{Sp} (2g,\mathbb{Q})$ is 
$2g$-dimensional and has a natural non-degenerate anti-symmetric bilinear form 
\[\mu: H \otimes H \longrightarrow \mathbb{Q}.\]
We identify $H$ with the dual space $H^\ast$ by $\mu$. 
Fix a symplectic basis $\{a_1, b_1, \ldots, a_g, b_g\}$ of $H$ with respect to $\mu$. 
That is, they satisfy
\[\mu(a_i, a_j) = \mu(b_i, b_j)=0, \qquad \mu(a_i,b_j)=\delta_{i,j}\]
for any $1 \le i,j \le g$. 

The free Lie algebra $\mathcal{L}(H)$ generated by $H$ has a natural graded 
Lie algebra structure. Let  
\[\mathcal{L}(H) = \bigoplus_{i=1}^\infty \mathcal{L}_i (H)\]
denote the corresponding direct sum decomposition with $\mathcal{L}_i (H)$ 
the degree $i$ part. We now consider the space $\hg$ of all derivations of $\mathcal{L}(H)$ 
that annihilate the symplectic element 
$\omega_0:= \sum_{i=1}^g [a_i, b_i] \in \mathcal{L}_2 (H)$. 
That is, an endomorphism $D$ of $\mathcal{L}(H)$ is in $\hg$ if it satisfies 
the {\it Leibniz rule}
\[D([X,Y]) =[D(X), Y] +[X,D(Y)]\]
for all $X, Y \in \mathcal{L}(H)$ and $D(\omega_0)=0$.  
$\hg$ is a Lie subalgebra of the Lie algebra of all endomorphisms of $\mathcal{L}(H)$. 
Let $\hg (k)$ be the degree $k$ homogeneous part of $\hg$. 
Since the Leibniz rule says that each element of $\hg$ is characterized by 
its action on $\mathcal{L}_1 (H)=H$, we have  
\[\hg (k)=\{D \in \hg \mid D(H) \subset \mathcal{L}_{k+1} (H)\}.\]
In other words, $\hg (k)$ is regarded as a subspace of 
\[\Hom (H, \mathcal{L}_{k+1} (H))= H^\ast \otimes \mathcal{L}_{k+1} (H) = H \otimes \mathcal{L}_{k+1} (H).\]
From this point of view, it is known that 
\[\hg (k)=\Ker \left( H \otimes \mathcal{L}_{k+1} (H) 
 \xrightarrow{[ \cdot, \cdot]} \mathcal{L}_{k+2} (H)\right)\]
and in particular, $\hg (0)$ coincides with the Lie algebra $\symp{Q}$ of $\Symp{Q}$. 
It is easy to check that the direct sum decomposition 
\[\hg = \bigoplus_{k=0}^\infty \hg (k)\]
gives a graded Lie algebra structure. 
We call $\hg$ the {\it symplectic derivation Lie algebra of $\mathcal{L}(H)$}.
The symplectic group $\mathrm{Sp}(2g,\Q)$ acts naturally on each $\hg (k)$.  
This action is the restriction of the diagonal action on $H \otimes \mathcal{L}_{k+1} (H)$, and 
therefore it is compatible with the stabilization $\hg \hookrightarrow \mathfrak{h}_{g+1, 1}$. 

In this paper, we concern with the abelianization $H_1 (\hg)=\hg / [\hg, \hg]$ 
of the Lie algebra $\hg$ as well as $H_1 (\hinf)=\hinf/[\hinf, \hinf]$ after taking 
the direct limit with respect to $g$. 
The grading of $\hg$ gives a direct sum decomposition  
\[H_1 (\hg) = \bigoplus_{w=0}^\infty H_1 (\hg)_w\]
with 
\[H_1 (\hg)_w := \hg (w) \Big/ \sum_{i=0}^{w} [ \hg (i),  \hg (w-i)]\]
called the {\it weight $w$ part}. 
Now we recall the theorem of Kontsevich mentioned in Introduction. 
For simplicity, we only mention the part related to $H_1 (\hinf)$ and 
refer to the original papers \cite{kontsevich1, kontsevich2} and 
a paper by Conant and Vogtmann \cite{cv0} for details. 
In these papers, it is shown that there exists an isomorphism 
\[H_1 (\hinf)_{2n} \cong H^{2n-1} (\Out F_{n+1};\mathbb{Q})\]
for any integer $n \ge 1$. 
Here $\Out F_{n+1}$ is the outer automorphism group of the free group of rank $(n+1)$ and 
$H^{2n-1} (\Out F_{n+1};\mathbb{Q})$ denotes the $(2n-1)$-st rational 
cohomology group of $\Out F_{n+1}$. 
By a technical reason, we rewrite $H_1 (\hinf)_{2n}$ as follows. 
Let
\[\hg^+=\bigoplus_{k=1}^\infty \hg(k)\]
be the ideal of the {\it positive} degree part. 
The spaces $H_1 (\hg)_w$ and $H_1 (\hg^+)_w$ are both $\mathrm{Sp}(2g,\Q)$-modules. 
As shown in the above cited papers (see also \cite[Section 2]{mss3}), 
we have 
\[H_1 (\hg)_w \cong H_1 (\hg^+)_w^{\mathrm{Sp}} 
=\hg (w)^{\mathrm{Sp}} \Big/ \left(\sum_{i=1}^{w-1} [ \hg (i),  \hg (w-i)]\right)^{\mathrm{Sp}}\]
for any $w \ge 1$. 
Here, for an $\mathrm{Sp}(2g,\Q)$-module $V$, we denote by $V^{\mathrm{Sp}}$ 
the invariant subspace for the $\mathrm{Sp}(2g,\Q)$-action and by 
$V_{\mathrm{Sp}}$ the coinvariant quotient of $V$. 
The general theory of $\mathrm{Sp}(2g,\Q)$-representations says that 
for a finite-dimensional representation $V$, 
$V^{\mathrm{Sp}}$ and $V_{\mathrm{Sp}}$ are isomorphic. Hence 
$\hg^{\mathrm{Sp}} \cong (\hg)_{\mathrm{Sp}}$. It is also known that  
$H_\ast (\hg^+)_w^{\mathrm{Sp}} \cong \big(H_\ast (\hg^+)_w\big)_{\mathrm{Sp}}$ stabilize when 
$g$ becomes large. In particular, $H_1 (\hinf)_{2n}$ is {\it finite}-dimensional. 

\section{Main results}

The main result of this paper is to derive 
$H_1 (\hinf^+)_{12}^{\mathrm{Sp}} \cong \Q$ without using theorems of Kontsevich and Bartholdi. 
That is, we prove it directly in $\mathfrak{h}_{\infty,1}^+$. In the proof, we 
construct an explicit linear map (cocycle) $C$ which gives the above isomorphism. 
More precisely, we have:

\begin{thm}
There exists an $\Symp{Q}$-invariant linear map  $C: \mathfrak{h}_{g,1} (12) \to \Q$ satisfying that 
\begin{itemize}
\item $C$ is non-trivial for any $g \ge 2$, 

\item the restriction of $C$ to 
$\displaystyle\sum_{i=1}^{11} [ \mathfrak{h}_{g,1} (i),  \mathfrak{h}_{g,1} (12-i)]$ is trivial.
\end{itemize}
That is, the cocycle $C$ gives a surjection 
$\widetilde{C}: H_1 (\mathfrak{h}_{g,1}^+ )_{12}^{\mathrm{Sp}} \cong 
\big(H_1 (\mathfrak{h}_{g,1}^+ )_{12}\big)_{\mathrm{Sp}} 
\twoheadrightarrow \Q$ for every $g \ge 2$. Moreover $\widetilde{C}$ is an isomorphism for $g \ge 8$.  
\end{thm}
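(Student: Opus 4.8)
The plan is to establish the three asserted properties of $C$ in turn, leaving the dimension bound for $g\ge 8$ until the end, since that is where essentially all of the computational difficulty resides. First I would observe that, because $\hg(12)\subset H\otimes \mathcal{L}_{13}(H)\subset H^{\otimes 14}$, an $\Symp{Q}$-invariant linear functional on $\hg(12)$ is nothing but an invariant element of $(H^{\otimes 14})^\ast$, i.e.\ (after identifying $H\cong H^\ast$ by $\mu$) an element of $(H^{\otimes 14})^{\mathrm{Sp}}$. By the first fundamental theorem of invariant theory for the symplectic group, this space is spanned by the contraction operators indexed by chord diagrams on $14$ points, and these $13!!$ contractions become linearly independent exactly when $g\ge 7$. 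Accordingly I would present $C$ as an explicit rational combination of such contractions, the coefficients being the data listed in the Appendix; its $\Symp{Q}$-invariance is then automatic.

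For non-triviality, I would exhibit a single explicit derivation $D\in\hg(12)$ --- built from iterated brackets of the symplectic basis vectors $a_i,b_i$ --- and verify by direct substitution that $C(D)\ne 0$. Since such a $D$ already exists for $g\ge 2$, this disposes of non-triviality throughout the unstable range at once. For the cocycle condition, I would check that $C$ annihilates each subspace $[\hg(i),\hg(12-i)]$, $1\le i\le 11$; this is precisely the linear condition singling $C$ out, the functionals on $\hg(12)$ killing all brackets being dual to $H_1(\hg^+)_{12}^{\mathrm{Sp}}$. By $\Symp{Q}$-equivariance this reduces to finitely many evaluations of the chord-diagram contractions against bracketed generators. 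Granting these two properties, $C$ descends to a nonzero, hence surjective, map $\widetilde C\colon H_1(\hg^+)_{12}^{\mathrm{Sp}}\twoheadrightarrow\Q$ for every $g\ge 2$, via the identification $V^{\mathrm{Sp}}\cong V_{\mathrm{Sp}}$ recalled in Section~\ref{sec:derivation}.

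The substantive claim is that $\widetilde C$ is an \emph{isomorphism} for $g\ge 8$, equivalently that $\dim H_1(\hg^+)_{12}^{\mathrm{Sp}}\le 1$. Because $\Symp{Q}$ is reductive and we work over $\Q$, the functor of invariants is exact, so
\[
\dim H_1(\hg^+)_{12}^{\mathrm{Sp}}=\dim \hg(12)^{\mathrm{Sp}}-\dim\Big(\sum_{i=1}^{11}[\hg(i),\hg(12-i)]\Big)^{\mathrm{Sp}},
\]
and the subtracted term is the rank of the total bracket map on invariants. The plan is therefore to fix explicit bases: realize each $\hg(k)=\Ker\big([\,\cdot,\cdot\,]\colon H\otimes\mathcal{L}_{k+1}(H)\to\mathcal{L}_{k+2}(H)\big)$ using a monomial (Hall or Lyndon) basis of the free Lie algebra, describe the invariants on both sides through chord diagrams, assemble the map $\bigoplus_{i=1}^{11}\big(\hg(i)\otimes\hg(12-i)\big)^{\mathrm{Sp}}\to \hg(12)^{\mathrm{Sp}}$ as an explicit rational matrix, and compute its rank. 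Subtracting from $\dim\hg(12)^{\mathrm{Sp}}$ should leave corank exactly $1$.

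This is where the bound $g\ge 8$ enters and where the main obstacle lies. The target invariants live in $(H^{\otimes 14})^{\mathrm{Sp}}$ and stabilize already for $g\ge 7$, but the \emph{source} $\hg(i)\otimes\hg(12-i)$ sits inside $H^{\otimes 16}$ for every $i$, and $(H^{\otimes 16})^{\mathrm{Sp}}$ stabilizes only for $g\ge 8$; for $g\le 7$ there are extra relations among the $15!!$ chord diagrams on $16$ points, so the rank of the bracket map may drop and the present method guarantees only surjectivity. Hence the corank computation must be performed in the stable regime $g\ge 8$. The hard part is then purely computational: the matrices are very large, and the challenge is to choose bases and term orderings that keep the elimination feasible while carrying out an exact rank computation over $\Q$ --- cross-checked, as is standard for results of this kind, by reduction modulo several primes. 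Once the corank is confirmed to be $1$ for $g\ge 8$, the surjection $\widetilde C$ is forced to be an isomorphism, which completes the proof.
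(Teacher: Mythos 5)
Your proposal is correct, and up through the construction of $C$ as an explicit rational combination of multiple contractions, the non-triviality check by a single explicit evaluation valid for all $g\ge 2$, and the symbolic verification that $C$ annihilates each $[\hg(i),\hg(12-i)]$, it coincides with the paper's route. Where you genuinely diverge is the isomorphism statement for $g\ge 8$: you propose to assemble the full bracket map $\bigl(\bigoplus_{i=1}^{11}\hg(i)\otimes\hg(12-i)\bigr)^{\mathrm{Sp}}\to\hg(12)^{\mathrm{Sp}}$ as a matrix and compute its rank, whereas the paper never performs this exhaustive computation. Instead it sandwiches the image: it exhibits $649$ explicit brackets of Lie spiders (almost all in $[\hg(1),\hg(11)]$, written in the letters $a_1,b_1,\dots,a_8,b_8$) whose images span a $649$-dimensional subspace $W\subset\hg(12)^{\mathrm{Sp}}\cong\Q^{650}$, chooses $C$ precisely as a linear relation annihilating $W$, and then verifies symbolically, in terms of $\mu$ and hence for all $g\ge 2$, that $C$ kills the entire bracket image; since $C\neq 0$, the image is squeezed between $W$ and $\Ker C$, both $649$-dimensional, forcing corank exactly $1$. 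This buys two things your plan lacks: it avoids spanning the invariant source, which sits inside the $15!!$-dimensional space $(H^{\otimes 16})^{\mathrm{Sp}}$ (vastly heavier than generating image vectors until the rank reaches $649$), and it makes the genus bound transparent, since $g\ge 8$ enters because the chosen generators use the $16$ basis vectors $a_1,\dots,b_8$, rather than through stability of $(H^{\otimes 16})^{\mathrm{Sp}}$ as in your account (both explanations are consistent). Your route is mathematically sound --- exactness of invariants for the reductive group $\Symp{Q}$ does justify your dimension formula --- but note that once you have verified the cocycle condition and produced $649$ independent brackets, the full rank computation becomes redundant: your plan overshoots exactly at its most expensive step.
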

This theorem gives an alternative proof of $H^{11} (\Out F_7;\mathbb{Q}) \cong \mathbb{Q}$. 

\begin{remark}
Since $\mathfrak{h}_{1,1} (12)^{\mathrm{Sp}}=0$ 
as mentioned in \cite{mss5}, we have 
$H_1 (\mathfrak{h}_{1,1}^+)_{12}^{\mathrm{Sp}}=0$. 
Therefore our bound of genus for the non-triviality of $H_1 (\mathfrak{h}_{g,1}^+)_{12}^{\mathrm{Sp}}$ 
is best possible. 
\end{remark}

\section{Method for computation}

In this section, we explain how we prove the main theorem with the aid of computers. 
Our computation for 
$H_1 \big(\mathfrak{h}_{g,1}^+)_{12}^{\mathrm{Sp}}$ proceeds in the following way: 
\begin{enumerate}
\item Find a coordinate system of $\mathfrak{h}_{g,1} (12)^{\mathrm{Sp}} \cong \mathbb{Q}^{650}$.

\item Compute the bracket map 
\[[ \,\cdot\, ,\,\cdot \,]: 
\left(\displaystyle\bigoplus_{i=1}^{11} \big(\mathfrak{h}_{g,1} (i) \otimes \mathfrak{h}_{g,1} (12-i)\big)\right)^{\mathrm{Sp}} 
\longrightarrow \mathfrak{h}_{g,1} (12)^{\mathrm{Sp}}\] 
and see that the image 
includes a $649$-dimensional subspace $W$. 

\item Find a linear map $C: \hg (12)^\mathrm{Sp} \twoheadrightarrow \mathbb{Q}$ 
which annihilates $W$.

\item Check that $C$ is trivial on the image of the bracket map.
\end{enumerate}

\subsection{Coordinate system of $\mathfrak{h}_{g,1} (12)^{\mathrm{Sp}}$ }\label{subsec:detector}
\hskip 12pt We begin by finding out a coordinate system of $\mathfrak{h}_{g,1} (12)^{\mathrm{Sp}} \cong 
\mathfrak{h}_{g,1} (12)_{\mathrm{Sp}}$, which is known to be isomorphic to $\mathbb{Q}^{650}$ 
for $g \ge 5$ (see \cite[Table 3]{mss5}, where 
$\dim \left(\mathfrak{h}_{3,1} (12)^{\mathrm{Sp}}\right)$ is wrongly written, 
the correct number is $354$). 
Every $\Symp{Q}$-invariant linear map 
$\mathfrak{h}_{g,1} (12) \to \mathbb{Q}$ factors through 
$\mathfrak{h}_{g,1} (12)_{\mathrm{Sp}}$ and 
the natural projection $\mathfrak{h}_{g,1} (12) \to \mathfrak{h}_{g,1} (12)_{\mathrm{Sp}}$ 
is regarded as  the projection onto $\mathfrak{h}_{g,1} (12)^{\mathrm{Sp}}$. 
Therefore we get a coordinate system of 
$\mathfrak{h}_{g,1} (12)^{\mathrm{Sp}}$ by finding 650 linearly independent 
$\Symp{Q}$-invariant linear maps $\mathfrak{h}_{g,1} (12) \to \mathbb{Q}$. 

Since $\mathfrak{h}_{g,1} (12)$ is an $\Symp{Q}$-submodule of 
$H \otimes \mathcal{L}_{13} (H) \subset H^{\otimes 14}$, we have 
$\mathfrak{h}_{g,1} (12)^{\mathrm{Sp}} \subset \big(H^{\otimes 14} \big)^{\mathrm{Sp}}$. 
A coordinate for $\big(H^{\otimes 14} \big)^{\mathrm{Sp}}$ is classically known and it is given as follows. 
Divide the set $\{a, b, c, \ldots, m, n\}$ of $14$ letters into $7$ pairs, say 
$(i_1 j_1) (i_2 j_2) \cdots (i_7 j_7)$. 
Then we consider the map 
\[\mu_{(i_1 j_1) (i_2 j_2) \cdots (i_7 j_7)}: H^{\otimes 14} \to \mathbb{Q}\]
defined by 
\[x_a \otimes x_b \otimes \cdots \otimes x_n \mapsto \mu (x_{i_1}, x_{j_1}) 
\mu (x_{i_2}, x_{j_2}) \cdots \mu(x_{i_7}, x_{j_7}).\]
Here we call this map a {\it multiple contraction}. It is $\Symp{Q}$-invariant since $\mu$ is so. 
We use multiple contractions restricted to $\mathfrak{h}_{g,1} (12)$ as coordinates of 
$\mathfrak{h}_{g,1} (12)^{\mathrm{Sp}}$. 
Note that they are invariant under the stabilization map $\hg \hookrightarrow \mathfrak{h}_{g+1,1}$. 

On the other hand, we use Lie spiders to express elements of $\mathfrak{h}_{g,1}$. 
A {\it Lie spider with $(k+2)$ legs} is defined by 
\begin{align*}
&S(u_1,u_2,u_3,\cdots, u_{k+2}) \\
&:= u_1\otimes [u_2,[u_3,[\cdots [u_{k+1},u_{k+2}]\cdots ]]]
+u_2\otimes [[u_3,[u_4,[\cdots [u_{k+1},u_{k+2}]\cdots ]]],u_1]\\
& \quad + u_3\otimes [ [u_4,[u_5,[\cdots [u_{k+1},u_{k+2}]\cdots ]]],[u_1,u_2]]+\cdots +u_{k+2}\otimes 
[[[\cdots [u_1,u_2],\cdots],u_{k}],u_{k+1}],
\end{align*}
where $u_i \in H$. It is known  (see \cite{levine}, for instance) that Lie spiders with $(k+2)$ legs belong to 
$\mathfrak{h}_{g,1}(k)$ and generate it. 

After calculating the pairings of a large amount of multiple contractions and Lie spiders, 
we see that the 650 multiple contractions $C_1, C_2, \ldots, C_{650}$ in Appendix \ref{append:detector}
are linearly independent. (Here we omit to display the corresponding 650 Lie spiders since we may 
use the data in Appendix \ref{append:bracket} together with the Lie spider in Subsection \ref{subsec:cocycleC}.)


\subsection{Computation of the bracket map}\label{subsec:bracketimage}
Since the bracket map 
\[[ \,\cdot\, ,\,\cdot \,]: 
\displaystyle\bigoplus_{i=1}^{11} \big( \mathfrak{h}_{g,1} (i) \otimes \mathfrak{h}_{g,1} (12-i)\big) \longrightarrow 
\mathfrak{h}_{g,1} (12)\]
is $\mathrm{Sp} (2g, \mathbb{Q})$-equivariant, it induces a linear map
\[[ \,\cdot\, ,\,\cdot \,]: 
\left(\displaystyle\bigoplus_{i=1}^{11} \big( \mathfrak{h}_{g,1} (i) \otimes 
\mathfrak{h}_{g,1} (12-i)\big)\right)^{\mathrm{Sp}} \longrightarrow \mathfrak{h}_{g,1} (12)^{\mathrm{Sp}}.\]
Recall that each of multiple contractions $\mu_{(i_1 j_1) (i_2 j_2) \cdots (i_7 j_7)}: \hg (12) \to \mathbb{Q}$ 
factors through $\mathfrak{h}_{g,1} (12)_{\mathrm{Sp}} 
\cong \mathfrak{h}_{g,1} (12)^{\mathrm{Sp}}$. Therefore as long as 
we use the coordinate system constructed above, we may work in 
the whole $\mathfrak{h}_{g,1} (12)$ without considering the projection 
onto the $\mathrm{Sp} (2g,\mathbb{Q})$-invariant part.

Although the general formula for the bracket map is a little bit complicated, 
there is a clear formula for the bracket of two Lie spiders, see \cite[Section 2.4.1]{cv0} for instance. 
After implementing a computer code for the formula, 
we compute the coordinates of the images of many brackets in 
$\mathfrak{h}_{g,1} (12)_{\mathrm{Sp}}$. 
As a result, we see that the 649 elements of 
$\displaystyle\sum_{i=1}^{11} [ \mathfrak{h}_{g,1} (i),  \mathfrak{h}_{g,1} (12-i)]$ 
exhibited in Appendix \ref{append:bracket} 
generate a $649$-dimensional subspace $W$ in $\mathfrak{h}_{g,1} (12)_{\mathrm{Sp}} \cong \mathbb{Q}^{650}$. 
The result is valid for $g \ge 8$ since we only use $a_1, b_1, \ldots, a_8, b_8$ 
in this computation.

\subsection{The map $C$}\label{subsec:mapC}

Next we look for a non-trivial $\mathrm{Sp} (2g, \mathbb{Q})$-invariant linear map 
$C: \mathfrak{h}_{g,1} (12) \to \mathbb{Q}$ annihilating $W$. 
For that we compute a linear relation over $W \cong \mathbb{Q}^{649}$ 
among our 650 multiple contractions. 
The result is given by a linear combination of 647 multiple contractions 
as in Appendix \ref{append:C}.

\subsection{Checking that $C$ is a non-trivial cocycle}\label{subsec:cocycleC}

The final step needs the heaviest computation. To show that $C$ is a cocycle, namely it is 
trivial on the image of the bracket map, 
we compute general formulas of bracket maps from each of 
$[\hg(1), \hg(11)]$, $[\hg(2), \hg(10)]$, $\ldots$, $[\hg(6), \hg(6)]$ to $\hg(12)_{\mathrm{Sp}}$   
in terms of $\mu$. The results are $0$ for all cases. This shows that $C$ is a cocycle for all $g \ge 2$. 
The non-triviality of $C$ follows from the explicit computation that 
\[C(S(a_1, b_1, a_1, a_1, a_1, a_1, a_2, a_1, b_1, b_1, b_1, b_1, b_1, b_2))=5832,\]
which is valid for all $g \ge 2$. 

\subsection{Implementation}\label{subsec:Mathematica}

The authors performed the above computations by using Mathematica. For that 
they wrote Mathematica codes whose core part computes  
the pairings of an element in $H^{\otimes 14}$ with multiple contractions. 
This is not so a difficult task if we use 
general {\it transformation $($replacement$)$ rules to symbolic expressions} in Mathematica. 
For example, an element $x_1 \otimes x_2 \otimes \cdots \otimes x_{14} \in H^{\otimes 14}$ with 
$x_i \in \{a_1, b_1, \ldots, a_g, b_g\}$ can be represented by 
the value {\tt ts[x1, x2, $\ldots$, x14]} 
of a function {\tt ts} with {\it no} definition assigned. 
We can treat general elements in $H^{\otimes 14}$ 
by just taking their linear combinations. 
Then the pairing of $x_1 \otimes x_2 \otimes \cdots \otimes x_{14}$ with 
the multiple contraction $\mu_{(ci)(dk)(el)(f n)(gm)(hj)}$, for example,  is given by 
the replacement rule 
\begin{center}
{\tt ts[a\_, b\_, c\_, d\_, e\_, f\_, g\_, h\_, i\_, j\_, k\_, l\_, m\_, n\_] :> \\
mu[c, i] mu[d,k] mu[e, l] mu[f, n] mu[g, m] mu[h, j]}
\end{center}
where {\tt mu} is the function which computes the value of the bilinear form $\mu$.

\section{Final remark}\label{sec:rem}

While an $\mathrm{Sp} (2g,\mathbb{Q})$-invariant map $\hg (12) \to \mathbb{Q}$ 
inducing the isomorphism $H_1 (\hg^+)_{12}^\mathrm{Sp} \cong \mathbb{Q}$ is unique up to scalar for $g \ge 8$, 
a description of this map using multiple contractions such as our cocycle $C$ is not so. 
It might be possible to obtain more simple expression than $C$ 
by taking another set of multiple contractions 
as a coordinate system of $\hg (12)^\mathrm{Sp} \cong \mathbb{Q}^{650}$. 

In the study of the structure of the Lie algebra $\hg^+$, the determination of 
the Lie subalgebra 
\[J=\bigoplus_{k = 1}^\infty J (k), \qquad J (k) \subset \hg (k)\] 
of $\hg^+$ generated by 
the degree $1$ part $\hg (1)$ has been considered to be important. 
It was shown by Hain \cite{hain} that this problem is the same as the determination 
of the rational image of the Johnson homomorphisms for subgroups of the mapping class groups of a surface. 
To this problem, Enomoto and Satoh \cite{es} provided the following powerful tool. 
They showed that the $\mathrm{Sp} (2g,\mathbb{Q})$-equivariant map obtained as the composition 
\[ES_k: \hg (k) \hookrightarrow H \otimes \mathcal{L}_{k+1} (H) \hookrightarrow H^{\otimes (k+2)} 
\xrightarrow{\mu \otimes (\mathrm{id}^{\otimes k})} H^{\otimes k} \to 
\big(H^{\otimes k}\big)_{\mathbb{Z}/k \mathbb{Z}}\]
is not trivial in general, but its restriction to $J (k)$ is trivial for any $k \ge 2$. Here 
$\big(H^{\otimes k}\big)_{\mathbb{Z}/k \mathbb{Z}}$ denotes the 
coinvariant quotient of $H^{\otimes k}$ with respect to 
the action of $\mathbb{Z}/k \mathbb{Z}$ as rotations of the entries. 
Consequently, the image of $ES_k$  gives a lower-bound estimate
of the gap between $J_k$ and $\hg (k)$. 
There are also important papers of Conant \cite{conant} and 
Conant-Kassabov \cite{ck} on this subject.

Using some data and Mathematica codes for the computation of the previous sections, 
the authors observed the following relationship between $ES_{12}$ and our cocycle $C$. 
\begin{thm}
For $g \ge 6$, the cocycle $C: \hg (12) \to \mathbb{Q}$ factors through $\Im ES_{12}$. 
\end{thm}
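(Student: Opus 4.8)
\textbf{Proof plan for Theorem (relationship between $ES_{12}$ and $C$).}
The plan is to show that the $\mathrm{Sp}(2g,\mathbb{Q})$-invariant functional $C$ depends on an element of $\hg(12)$ only through its image under the Enomoto--Satoh map $ES_{12}$. Equivalently, since $C$ factors through $\Im ES_{12}$ precisely when $C$ vanishes on $\Ker ES_{12}$, I would reduce the statement to the inclusion $\Ker\big(ES_{12}\colon \hg(12)\to (H^{\otimes 12})_{\mathbb{Z}/12\mathbb{Z}}\big) \subset \Ker C$. First I would pass, as throughout the paper, to the $\mathrm{Sp}(2g,\mathbb{Q})$-invariant parts: both $C$ and $ES_{12}$ are equivariant, so $C$ descends to $\hg(12)^{\mathrm{Sp}}$ and $ES_{12}$ descends to a map $\hg(12)^{\mathrm{Sp}} \to \big((H^{\otimes 12})_{\mathbb{Z}/12\mathbb{Z}}\big)^{\mathrm{Sp}}$. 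The claim then becomes the linear-algebra statement that $\Ker(ES_{12}) \subset \Ker(C)$ inside the $650$-dimensional space $\hg(12)^{\mathrm{Sp}}$, for $g$ large enough that this dimension has stabilized (here $g \ge 6$).

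The computational heart of the plan mirrors the machinery already set up in Section~\ref{subsec:detector} and Section~\ref{subsec:bracketimage}. Using the coordinate system of $650$ multiple contractions $C_1,\ldots,C_{650}$ on $\hg(12)^{\mathrm{Sp}}$, I would express $ES_{12}$ in those same coordinates. Concretely, I would implement $ES_{12}$ via the Mathematica replacement-rule framework of Section~\ref{subsec:Mathematica}: the composition $\hg(12)\hookrightarrow H^{\otimes 14}\xrightarrow{\mu\otimes \mathrm{id}^{\otimes 12}} H^{\otimes 12}\to (H^{\otimes 12})_{\mathbb{Z}/12\mathbb{Z}}$ is assembled from one contraction $\mu$ on the first two tensor factors followed by cyclic symmetrization of the remaining twelve, all of which are elementary symbolic operations on the {\tt ts} expressions. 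Pairing $ES_{12}$ applied to the $650$ Lie spiders of our coordinate system against a spanning set of $\mathbb{Z}/12\mathbb{Z}$-invariant multiple contractions on $H^{\otimes 12}$ produces an explicit matrix $M$ whose kernel is $\Ker(ES_{12})$. Writing the cocycle $C$ as the row vector $c$ of its $647$ multiple-contraction coefficients (Appendix~\ref{append:C}) in the same coordinates, the theorem reduces to checking the rank condition that the row $c$ lies in the row span of $M$, i.e.\ that every vector annihilated by $M$ is annihilated by $c$.

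I expect the main obstacle to be bookkeeping and consistency of coordinates rather than any conceptual subtlety. The delicate points are: (i) making sure $ES_{12}$ is computed in exactly the same basis of $\hg(12)^{\mathrm{Sp}}$ that underlies the coordinate system of Section~\ref{subsec:detector}, so that $c$ and $M$ speak the same language; (ii) correctly realizing the passage to the cyclic coinvariants $(H^{\otimes 12})_{\mathbb{Z}/12\mathbb{Z}}$, which requires enumerating $\mathbb{Z}/12\mathbb{Z}$-orbits of pairings and symmetrizing, a step where sign and indexing errors are easy to make; and (iii) verifying that the relevant dimensions have stabilized at $g=6$, so that the rank computation performed with $a_1,b_1,\ldots,a_6,b_6$ genuinely reflects the stable situation. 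Once $M$ is in hand, the final verification $c \in \mathrm{row\text{-}span}(M)$ is a single exact rational rank computation, entirely analogous to the $649$-dimensional image computation of Section~\ref{subsec:bracketimage}, and the result that $C$ factors through $\Im ES_{12}$ follows immediately. Because the Enomoto--Satoh target is considerably smaller and more rigid than $\hg(12)^{\mathrm{Sp}}$ itself, the size of the computation should be comparable to, and if anything lighter than, the cocycle check already carried out in Section~\ref{subsec:cocycleC}.
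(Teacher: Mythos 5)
Your proposal is correct and takes essentially the same route as the paper's own sketch: both reduce the claim to the linear-algebra statement that $C$ vanishes on $\Ker ES_{12}$ inside $\hg(12)^{\mathrm{Sp}} \cong \mathbb{Q}^{650}$, computed in the multiple-contraction coordinates (the paper finds $\Im ES_{12}$ to be $284$-dimensional in $\big(H^{\otimes 12}\big)_{\mathbb{Z}/12\mathbb{Z}}^{\mathrm{Sp}} \cong \mathbb{Q}^{897}$ and then evaluates $C$ on a basis of the kernel). Your formulation that the row vector $c$ lie in the row span of the matrix $M$ of $ES_{12}$ is merely an equivalent restatement of that kernel check.
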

\begin{proof}[Sketch of Proof]
First we see that the image of $ES_{12}: \hg (12)^\mathrm{Sp} \to 
\big(H^{\otimes 12}\big)_{\mathbb{Z}/12 \mathbb{Z}}^\mathrm{Sp} \cong \mathbb{Q}^{897}$ is 
$284$-dimensional and take a basis of $\Ker ES_{12}$. Then we observe that the map $C$ is trivial 
on the $\Ker ES_{12}$ by applying $C$ to the basis. 
\end{proof}

By using this theorem, it is possible to give another description of the 
$\mathrm{Sp} (2g,\mathbb{Q})$-invariant map $C: \hg (12) \to \mathbb{Q}$
in the form $C= C' \circ ES_{12}$ with an $\mathrm{Sp} (2g,\mathbb{Q})$-invariant map 
$C': \mathrm{Im} ES_{12} \subset \big(H^{\otimes 12}\big)_{\mathbb{Z}/12 \mathbb{Z}} \to \mathbb{Q}$. 
Such a map $C'$ is described by chord diagrams with $6$ chords serving as a coordinate system of 
$\big(H^{\otimes 12}\big)_{\mathbb{Z}/12 \mathbb{Z}}^\mathrm{Sp} \cong \mathbb{Q}^{897}$. 
The details of the above computation will appear elsewhere.

\newpage
\appendix
\section{A coordinate system of $\mathfrak{h}_{\infty,1} (12)^{\mathrm{Sp}} \cong \mathbb{Q}^{650}$}\label{append:detector}

The authors found the following 650 multiple contractions 
\[(C_1, \ldots, C_{650}): \mathfrak{h}_{g,1} (12)^{\mathrm{Sp}} \longrightarrow \mathbb{Q}^{650}\]
forming a coordinate system of $\mathfrak{h}_{g,1} (12)^{\mathrm{Sp}}$ 
for $g \ge 5$ (see \cite[Theorem 1.2]{mss5}). 
The multiple contraction $C_m$ for $1 \le m \le 650$ is given below in the form
\[m \ \  (i_2 j_2)(i_3 j_3)(i_4 j_4)(i_5j_5)(i_6j_6)(i_7j_7),\]
which means the restriction of the $\mathrm{Sp}(2g,\mathbb{Q})$-invariant map 
\[\mu_{(ab) (i_2 j_2) (i_3 j_3) \cdots (i_7 j_7)}: H^{\otimes 14} \longrightarrow \mathbb{Q}\]
to the subspace $\mathfrak{h}_{g,1} (12)$. 

\bigskip
\noindent
{\tiny
\[\begin{array}{rrrr}
1 \ (ci) (dk) (el) (fn) (gm) (hj) &
2 \ (ci) (dk) (em) (fl) (gn) (hj) &
3 \ (ci) (dk) (el) (fm) (gn) (hj) &
4 \ (ci) (dj) (en) (fm) (gl) (hk) \\
5 \ (ci) (dj) (em) (fn) (gl) (hk) &
6 \ (ci) (dj) (en) (fl) (gm) (hk) &
7 \ (ci) (dj) (el) (fn) (gm) (hk) &
8 \ (ci) (dj) (em) (fl) (gn) (hk) \\
9 \ (ci) (dj) (el) (fm) (gn) (hk) &
10 \ (ci) (dj) (en) (fm) (gk) (hl) &
11 \ (ci) (dj) (em) (fn) (gk) (hl) &
12 \ (ci) (dj) (en) (fk) (gm) (hl) \\
13 \ (ci) (dk) (ej) (fn) (gm) (hl) &
14 \ (ci) (dj) (ek) (fn) (gm) (hl) &
15 \ (ci) (dk) (em) (fj) (gn) (hl) &
16 \ (ci) (dj) (em) (fk) (gn) (hl) \\
17 \ (ci) (dk) (ej) (fm) (gn) (hl) &
18 \ (ci) (dj) (ek) (fm) (gn) (hl) &
19 \ (ci) (dk) (el) (fn) (gj) (hm) &
20 \ (ci) (dj) (en) (fl) (gk) (hm) \\
21 \ (ci) (dj) (el) (fn) (gk) (hm) &
22 \ (ci) (dj) (en) (fk) (gl) (hm) &
23 \ (ci) (dk) (ej) (fn) (gl) (hm) &
24 \ (ci) (dj) (ek) (fn) (gl) (hm) \\
25 \ (ci) (dk) (el) (fj) (gn) (hm) &
26 \ (ci) (dj) (el) (fk) (gn) (hm) &
27 \ (ci) (dk) (ej) (fl) (gn) (hm) &
28 \ (ci) (dj) (ek) (fl) (gn) (hm) \\
29 \ (ci) (dk) (em) (fl) (gj) (hn) &
30 \ (ci) (dj) (em) (fl) (gk) (hn) &
31 \ (ci) (dj) (el) (fm) (gk) (hn) &
32 \ (ci) (dk) (em) (fj) (gl) (hn) \\
33 \ (ci) (dj) (em) (fk) (gl) (hn) &
34 \ (ci) (dj) (ek) (fm) (gl) (hn) &
35 \ (cd) (el) (fn) (gm) (hk) (ij) &
36 \ (cd) (en) (fk) (gm) (hl) (ij) \\
37 \ (cd) (ek) (fn) (gm) (hl) (ij) &
38 \ (cd) (ek) (fm) (gn) (hl) (ij) &
39 \ (cd) (el) (fn) (gk) (hm) (ij) &
40 \ (cd) (ek) (fn) (gl) (hm) (ij) \\
41 \ (cd) (ek) (fl) (gn) (hm) (ij) &
42 \ (cd) (el) (fn) (gm) (hj) (ik) &
43 \ (cd) (en) (fm) (gj) (hl) (ik) &
44 \ (cd) (em) (fn) (gj) (hl) (ik) \\
45 \ (cd) (en) (fj) (gm) (hl) (ik) &
46 \ (ce) (dj) (fn) (gm) (hl) (ik) &
47 \ (cd) (ej) (fn) (gm) (hl) (ik) &
48 \ (ce) (dj) (fm) (gn) (hl) (ik) \\
49 \ (cd) (ej) (fm) (gn) (hl) (ik) &
50 \ (cd) (el) (fn) (gj) (hm) (ik) &
51 \ (cd) (en) (fj) (gl) (hm) (ik) &
52 \ (ce) (dj) (fn) (gl) (hm) (ik) \\
53 \ (cd) (ej) (fn) (gl) (hm) (ik) &
54 \ (cd) (el) (fj) (gn) (hm) (ik) &
55 \ (ce) (dj) (fl) (gn) (hm) (ik) &
56 \ (cd) (ej) (fl) (gn) (hm) (ik) \\
57 \ (ce) (dj) (fm) (gl) (hn) (ik) &
58 \ (cd) (ej) (fm) (gl) (hn) (ik) &
59 \ (ce) (dj) (fl) (gm) (hn) (ik) &
60 \ (cd) (ej) (fl) (gm) (hn) (ik) \\
61 \ (cd) (en) (fk) (gm) (hj) (il) &
62 \ (cd) (ek) (fn) (gm) (hj) (il) &
63 \ (cd) (ek) (fm) (gn) (hj) (il) &
64 \ (cd) (en) (fj) (gm) (hk) (il) \\
65 \ (ce) (dj) (fn) (gm) (hk) (il) &
66 \ (cd) (ej) (fn) (gm) (hk) (il) &
67 \ (ce) (dj) (fm) (gn) (hk) (il) &
68 \ (cd) (ej) (fm) (gn) (hk) (il) \\
69 \ (cd) (ek) (fn) (gj) (hm) (il) &
70 \ (cd) (en) (fj) (gk) (hm) (il) &
71 \ (ce) (dj) (fn) (gk) (hm) (il) &
72 \ (cd) (ej) (fn) (gk) (hm) (il) \\
73 \ (cd) (ek) (fj) (gn) (hm) (il) &
74 \ (ce) (dj) (fk) (gn) (hm) (il) &
75 \ (cd) (ej) (fk) (gn) (hm) (il) &
76 \ (ce) (dj) (fm) (gk) (hn) (il) \\
77 \ (cd) (ej) (fm) (gk) (hn) (il) &
78 \ (cd) (ek) (fj) (gm) (hn) (il) &
79 \ (ce) (dj) (fk) (gm) (hn) (il) &
80 \ (cd) (ej) (fk) (gm) (hn) (il) \\
81 \ (cd) (el) (fn) (gk) (hj) (im) &
82 \ (cd) (ek) (fn) (gl) (hj) (im) &
83 \ (cd) (ek) (fl) (gn) (hj) (im) &
84 \ (cd) (el) (fn) (gj) (hk) (im) \\
85 \ (ce) (dj) (fn) (gl) (hk) (im) &
86 \ (cd) (ej) (fn) (gl) (hk) (im) &
87 \ (ce) (dj) (fl) (gn) (hk) (im) &
88 \ (cd) (ej) (fl) (gn) (hk) (im) \\
89 \ (cd) (ek) (fn) (gj) (hl) (im) &
90 \ (ce) (dj) (fn) (gk) (hl) (im) &
91 \ (cd) (ej) (fn) (gk) (hl) (im) &
92 \ (cd) (ek) (fj) (gn) (hl) (im) \\
93 \ (ce) (dj) (fk) (gn) (hl) (im) &
94 \ (cd) (ej) (fk) (gn) (hl) (im) &
95 \ (ce) (dj) (fl) (gk) (hn) (im) &
96 \ (cd) (ej) (fl) (gk) (hn) (im) \\
97 \ (ce) (dj) (fk) (gl) (hn) (im) &
98 \ (cd) (ej) (fk) (gl) (hn) (im) &
99 \ (ce) (dj) (fm) (gl) (hk) (in) &
100 \ (ce) (dj) (fl) (gm) (hk) (in) \\
101 \ (ce) (dj) (fm) (gk) (hl) (in) &
102 \ (ce) (dj) (fk) (gm) (hl) (in) &
103 \ (cd) (ej) (fk) (gm) (hl) (in) &
104 \ (ce) (dj) (fl) (gk) (hm) (in) \\
105 \ (ce) (dj) (fk) (gl) (hm) (in) &
106 \ (ce) (di) (fn) (gm) (hl) (jk) &
107 \ (cd) (ei) (fn) (gm) (hl) (jk) &
108 \ (ce) (di) (fm) (gn) (hl) (jk) \\
109 \ (cd) (ei) (fm) (gn) (hl) (jk) &
110 \ (ce) (di) (fn) (gl) (hm) (jk) &
111 \ (cd) (ei) (fn) (gl) (hm) (jk) &
112 \ (ce) (di) (fl) (gn) (hm) (jk) \\
113 \ (cd) (ei) (fl) (gn) (hm) (jk) &
114 \ (ce) (di) (fm) (gl) (hn) (jk) &
115 \ (cd) (ei) (fm) (gl) (hn) (jk) &
116 \ (ce) (di) (fl) (gm) (hn) (jk) \\
117 \ (cd) (ei) (fl) (gm) (hn) (jk) &
118 \ (cd) (en) (fm) (gh) (il) (jk) &
119 \ (cd) (em) (fn) (gh) (il) (jk) &
120 \ (cf) (dh) (en) (gm) (il) (jk) \\
121 \ (cd) (en) (fh) (gm) (il) (jk) &
122 \ (ce) (dh) (fn) (gm) (il) (jk) &
123 \ (cd) (eh) (fn) (gm) (il) (jk) &
124 \ (cf) (dh) (em) (gn) (il) (jk) \\
125 \ (ce) (dh) (fm) (gn) (il) (jk) &
126 \ (cd) (eh) (fm) (gn) (il) (jk) &
127 \ (cf) (dg) (en) (hm) (il) (jk) &
128 \ (cd) (en) (fg) (hm) (il) (jk) \\
129 \ (ce) (dg) (fn) (hm) (il) (jk) &
130 \ (cd) (eg) (fn) (hm) (il) (jk) &
131 \ (ce) (df) (gn) (hm) (il) (jk) &
132 \ (cf) (dg) (em) (hn) (il) (jk) \\
133 \ (ce) (dg) (fm) (hn) (il) (jk) &
134 \ (cd) (eg) (fm) (hn) (il) (jk) &
135 \ (ce) (df) (gm) (hn) (il) (jk) &
136 \ (cd) (el) (fn) (gh) (im) (jk) \\
137 \ (cf) (dh) (en) (gl) (im) (jk) &
138 \ (cd) (en) (fh) (gl) (im) (jk) &
139 \ (ce) (dh) (fn) (gl) (im) (jk) &
140 \ (cd) (eh) (fn) (gl) (im) (jk) \\
141 \ (cf) (dh) (el) (gn) (im) (jk) &
142 \ (cd) (el) (fh) (gn) (im) (jk) &
143 \ (ce) (dh) (fl) (gn) (im) (jk) &
144 \ (cd) (eh) (fl) (gn) (im) (jk) \\
145 \ (cf) (dg) (en) (hl) (im) (jk) &
146 \ (cd) (en) (fg) (hl) (im) (jk) &
147 \ (ce) (dg) (fn) (hl) (im) (jk) &
148 \ (cd) (eg) (fn) (hl) (im) (jk) \\
149 \ (ce) (df) (gn) (hl) (im) (jk) &
150 \ (cf) (dg) (el) (hn) (im) (jk) &
151 \ (ce) (dg) (fl) (hn) (im) (jk) &
152 \ (cd) (eg) (fl) (hn) (im) (jk) \\
153 \ (ce) (df) (gl) (hn) (im) (jk) &
154 \ (cd) (ef) (gl) (hn) (im) (jk) &
155 \ (cf) (dh) (em) (gl) (in) (jk) &
156 \ (ce) (dh) (fm) (gl) (in) (jk) \\
157 \ (cd) (eh) (fm) (gl) (in) (jk) &
158 \ (cf) (dh) (el) (gm) (in) (jk) &
159 \ (ce) (dh) (fl) (gm) (in) (jk) &
160 \ (cd) (eh) (fl) (gm) (in) (jk) \\
161 \ (cf) (dg) (em) (hl) (in) (jk) &
162 \ (ce) (dg) (fm) (hl) (in) (jk) &
163 \ (cd) (eg) (fm) (hl) (in) (jk) &
164 \ (ce) (df) (gm) (hl) (in) (jk) \\
165 \ (cf) (dl) (eg) (hm) (in) (jk) &
166 \ (cf) (dg) (el) (hm) (in) (jk) &
167 \ (ce) (dg) (fl) (hm) (in) (jk) &
168 \ (cd) (eg) (fl) (hm) (in) (jk) \\
169 \ (ce) (df) (gl) (hm) (in) (jk) &
170 \ (cd) (en) (fi) (gm) (hk) (jl) &
171 \ (ce) (di) (fn) (gm) (hk) (jl) &
172 \ (cd) (ei) (fn) (gm) (hk) (jl) 
\end{array}\]

\newpage
\[\begin{array}{rrrr}
173 \ (ce) (di) (fm) (gn) (hk) (jl) &
174 \ (cd) (ei) (fm) (gn) (hk) (jl) &
175 \ (cd) (en) (fi) (gk) (hm) (jl) &
176 \ (ce) (di) (fn) (gk) (hm) (jl) \\
177 \ (cd) (ei) (fn) (gk) (hm) (jl) &
178 \ (ce) (di) (fk) (gn) (hm) (jl) &
179 \ (cd) (ei) (fk) (gn) (hm) (jl) &
180 \ (ce) (di) (fm) (gk) (hn) (jl) \\
181 \ (cd) (ei) (fm) (gk) (hn) (jl) &
182 \ (cd) (ek) (fi) (gm) (hn) (jl) &
183 \ (ce) (di) (fk) (gm) (hn) (jl) &
184 \ (cd) (ei) (fk) (gm) (hn) (jl) \\
185 \ (cg) (dh) (en) (fm) (ik) (jl) &
186 \ (cg) (dh) (em) (fn) (ik) (jl) &
187 \ (cd) (en) (fm) (gh) (ik) (jl) &
188 \ (cd) (em) (fn) (gh) (ik) (jl) \\
189 \ (cf) (dh) (en) (gm) (ik) (jl) &
190 \ (cd) (en) (fh) (gm) (ik) (jl) &
191 \ (ce) (dh) (fn) (gm) (ik) (jl) &
192 \ (cd) (eh) (fn) (gm) (ik) (jl) \\
193 \ (cf) (dh) (em) (gn) (ik) (jl) &
194 \ (ce) (dh) (fm) (gn) (ik) (jl) &
195 \ (cd) (eh) (fm) (gn) (ik) (jl) &
196 \ (cf) (dg) (en) (hm) (ik) (jl) \\
197 \ (cd) (en) (fg) (hm) (ik) (jl) &
198 \ (ce) (dg) (fn) (hm) (ik) (jl) &
199 \ (cd) (eg) (fn) (hm) (ik) (jl) &
200 \ (ce) (df) (gn) (hm) (ik) (jl) \\
201 \ (cf) (dg) (em) (hn) (ik) (jl) &
202 \ (ce) (dg) (fm) (hn) (ik) (jl) &
203 \ (cd) (eg) (fm) (hn) (ik) (jl) &
204 \ (ce) (df) (gm) (hn) (ik) (jl) \\
205 \ (cd) (ek) (fn) (gh) (im) (jl) &
206 \ (cf) (dh) (en) (gk) (im) (jl) &
207 \ (cd) (en) (fh) (gk) (im) (jl) &
208 \ (ce) (dh) (fn) (gk) (im) (jl) \\
209 \ (cd) (eh) (fn) (gk) (im) (jl) &
210 \ (cf) (dh) (ek) (gn) (im) (jl) &
211 \ (cd) (ek) (fh) (gn) (im) (jl) &
212 \ (ce) (dh) (fk) (gn) (im) (jl) \\
213 \ (cd) (eh) (fk) (gn) (im) (jl) &
214 \ (cf) (dg) (en) (hk) (im) (jl) &
215 \ (cd) (en) (fg) (hk) (im) (jl) &
216 \ (ce) (dg) (fn) (hk) (im) (jl) \\
217 \ (cd) (eg) (fn) (hk) (im) (jl) &
218 \ (ce) (df) (gn) (hk) (im) (jl) &
219 \ (cd) (ef) (gn) (hk) (im) (jl) &
220 \ (cf) (dg) (ek) (hn) (im) (jl) \\
221 \ (cd) (ek) (fg) (hn) (im) (jl) &
222 \ (ce) (dg) (fk) (hn) (im) (jl) &
223 \ (cd) (eg) (fk) (hn) (im) (jl) &
224 \ (ce) (df) (gk) (hn) (im) (jl) \\
225 \ (cd) (ef) (gk) (hn) (im) (jl) &
226 \ (cf) (dh) (em) (gk) (in) (jl) &
227 \ (ce) (dh) (fm) (gk) (in) (jl) &
228 \ (cd) (eh) (fm) (gk) (in) (jl) \\
229 \ (cf) (dh) (ek) (gm) (in) (jl) &
230 \ (ce) (dh) (fk) (gm) (in) (jl) &
231 \ (cd) (eh) (fk) (gm) (in) (jl) &
232 \ (cf) (dm) (eg) (hk) (in) (jl) \\
233 \ (cf) (dg) (em) (hk) (in) (jl) &
234 \ (ce) (dg) (fm) (hk) (in) (jl) &
235 \ (cd) (eg) (fm) (hk) (in) (jl) &
236 \ (ce) (df) (gm) (hk) (in) (jl) \\
237 \ (cf) (dg) (ek) (hm) (in) (jl) &
238 \ (ce) (dg) (fk) (hm) (in) (jl) &
239 \ (cd) (eg) (fk) (hm) (in) (jl) &
240 \ (ce) (df) (gk) (hm) (in) (jl) \\
241 \ (cd) (ef) (gk) (hm) (in) (jl) &
242 \ (cd) (el) (fn) (gi) (hk) (jm) &
243 \ (ce) (di) (fn) (gl) (hk) (jm) &
244 \ (cd) (ei) (fn) (gl) (hk) (jm) \\
245 \ (ce) (di) (fl) (gn) (hk) (jm) &
246 \ (cd) (ei) (fl) (gn) (hk) (jm) &
247 \ (cd) (ek) (fn) (gi) (hl) (jm) &
248 \ (cf) (di) (en) (gk) (hl) (jm) \\
249 \ (ce) (di) (fn) (gk) (hl) (jm) &
250 \ (cd) (ei) (fn) (gk) (hl) (jm) &
251 \ (cf) (di) (ek) (gn) (hl) (jm) &
252 \ (ce) (di) (fk) (gn) (hl) (jm) \\
253 \ (cd) (ei) (fk) (gn) (hl) (jm) &
254 \ (cf) (di) (el) (gk) (hn) (jm) &
255 \ (ce) (di) (fl) (gk) (hn) (jm) &
256 \ (cd) (ei) (fl) (gk) (hn) (jm) \\
257 \ (cf) (di) (ek) (gl) (hn) (jm) &
258 \ (ce) (di) (fk) (gl) (hn) (jm) &
259 \ (cd) (ei) (fk) (gl) (hn) (jm) &
260 \ (cd) (el) (fn) (gh) (ik) (jm) \\
261 \ (cf) (dh) (en) (gl) (ik) (jm) &
262 \ (cd) (en) (fh) (gl) (ik) (jm) &
263 \ (ce) (dh) (fn) (gl) (ik) (jm) &
264 \ (cd) (eh) (fn) (gl) (ik) (jm) \\
265 \ (cf) (dh) (el) (gn) (ik) (jm) &
266 \ (cd) (el) (fh) (gn) (ik) (jm) &
267 \ (ce) (dh) (fl) (gn) (ik) (jm) &
268 \ (cd) (eh) (fl) (gn) (ik) (jm) \\
269 \ (cd) (en) (fg) (hl) (ik) (jm) &
270 \ (ce) (dg) (fn) (hl) (ik) (jm) &
271 \ (cd) (eg) (fn) (hl) (ik) (jm) &
272 \ (ce) (df) (gn) (hl) (ik) (jm) \\
273 \ (cf) (dg) (el) (hn) (ik) (jm) &
274 \ (ce) (dg) (fl) (hn) (ik) (jm) &
275 \ (cd) (eg) (fl) (hn) (ik) (jm) &
276 \ (ce) (df) (gl) (hn) (ik) (jm) \\
277 \ (cd) (ef) (gl) (hn) (ik) (jm) &
278 \ (cd) (ek) (fn) (gh) (il) (jm) &
279 \ (cf) (dh) (en) (gk) (il) (jm) &
280 \ (cd) (en) (fh) (gk) (il) (jm) \\
281 \ (ce) (dh) (fn) (gk) (il) (jm) &
282 \ (cd) (eh) (fn) (gk) (il) (jm) &
283 \ (cf) (dh) (ek) (gn) (il) (jm) &
284 \ (cd) (ek) (fh) (gn) (il) (jm) \\
285 \ (ce) (dh) (fk) (gn) (il) (jm) &
286 \ (cd) (eh) (fk) (gn) (il) (jm) &
287 \ (cf) (dg) (en) (hk) (il) (jm) &
288 \ (cd) (en) (fg) (hk) (il) (jm) \\
289 \ (ce) (dg) (fn) (hk) (il) (jm) &
290 \ (cd) (eg) (fn) (hk) (il) (jm) &
291 \ (ce) (df) (gn) (hk) (il) (jm) &
292 \ (cf) (dg) (ek) (hn) (il) (jm) \\
293 \ (ce) (dg) (fk) (hn) (il) (jm) &
294 \ (cd) (eg) (fk) (hn) (il) (jm) &
295 \ (ce) (df) (gk) (hn) (il) (jm) &
296 \ (cd) (ef) (gk) (hn) (il) (jm) \\
297 \ (cf) (dh) (el) (gk) (in) (jm) &
298 \ (ce) (dh) (fl) (gk) (in) (jm) &
299 \ (cd) (eh) (fl) (gk) (in) (jm) &
300 \ (cf) (dh) (ek) (gl) (in) (jm) \\
301 \ (ce) (dh) (fk) (gl) (in) (jm) &
302 \ (cd) (eh) (fk) (gl) (in) (jm) &
303 \ (cf) (dg) (el) (hk) (in) (jm) &
304 \ (ce) (dg) (fl) (hk) (in) (jm) \\
305 \ (cd) (eg) (fl) (hk) (in) (jm) &
306 \ (ce) (df) (gl) (hk) (in) (jm) &
307 \ (cf) (dk) (eg) (hl) (in) (jm) &
308 \ (cf) (dg) (ek) (hl) (in) (jm) \\
309 \ (ce) (dg) (fk) (hl) (in) (jm) &
310 \ (cd) (eg) (fk) (hl) (in) (jm) &
311 \ (ce) (df) (gk) (hl) (in) (jm) &
312 \ (cd) (ef) (gk) (hl) (in) (jm) \\
313 \ (ce) (di) (fm) (gl) (hk) (jn) &
314 \ (cf) (di) (el) (gm) (hk) (jn) &
315 \ (ce) (di) (fl) (gm) (hk) (jn) &
316 \ (ce) (di) (fm) (gk) (hl) (jn) \\
317 \ (ce) (di) (fk) (gm) (hl) (jn) &
318 \ (cd) (ei) (fk) (gm) (hl) (jn) &
319 \ (cf) (di) (el) (gk) (hm) (jn) &
320 \ (ce) (di) (fl) (gk) (hm) (jn) \\
321 \ (ce) (di) (fk) (gl) (hm) (jn) &
322 \ (cg) (dh) (em) (fl) (ik) (jn) &
323 \ (cg) (dh) (el) (fm) (ik) (jn) &
324 \ (cf) (dh) (em) (gl) (ik) (jn) \\
325 \ (ce) (dh) (fm) (gl) (ik) (jn) &
326 \ (cd) (eh) (fm) (gl) (ik) (jn) &
327 \ (cf) (dh) (el) (gm) (ik) (jn) &
328 \ (ce) (dh) (fl) (gm) (ik) (jn) \\
329 \ (cd) (eh) (fl) (gm) (ik) (jn) &
330 \ (cf) (dg) (em) (hl) (ik) (jn) &
331 \ (ce) (dg) (fm) (hl) (ik) (jn) &
332 \ (cd) (eg) (fm) (hl) (ik) (jn) \\
333 \ (ce) (df) (gm) (hl) (ik) (jn) &
334 \ (cf) (dl) (eg) (hm) (ik) (jn) &
335 \ (cf) (dg) (el) (hm) (ik) (jn) &
336 \ (ce) (dg) (fl) (hm) (ik) (jn) \\
337 \ (cd) (eg) (fl) (hm) (ik) (jn) &
338 \ (ce) (df) (gl) (hm) (ik) (jn) &
339 \ (cg) (dh) (ek) (fm) (il) (jn) &
340 \ (cf) (dh) (em) (gk) (il) (jn) \\
341 \ (ce) (dh) (fm) (gk) (il) (jn) &
342 \ (cd) (eh) (fm) (gk) (il) (jn) &
343 \ (cf) (dh) (ek) (gm) (il) (jn) &
344 \ (ce) (dh) (fk) (gm) (il) (jn) \\
345 \ (cd) (eh) (fk) (gm) (il) (jn) &
346 \ (cf) (dg) (em) (hk) (il) (jn) &
347 \ (ce) (dg) (fm) (hk) (il) (jn) &
348 \ (cd) (eg) (fm) (hk) (il) (jn) \\
349 \ (ce) (df) (gm) (hk) (il) (jn) &
350 \ (cf) (dk) (eg) (hm) (il) (jn) &
351 \ (cf) (dg) (ek) (hm) (il) (jn) &
352 \ (ce) (dg) (fk) (hm) (il) (jn) \\
353 \ (cd) (eg) (fk) (hm) (il) (jn) &
354 \ (ce) (df) (gk) (hm) (il) (jn) &
355 \ (cf) (dh) (el) (gk) (im) (jn) &
356 \ (ce) (dh) (fl) (gk) (im) (jn) \\
357 \ (cd) (eh) (fl) (gk) (im) (jn) &
358 \ (cf) (dk) (eh) (gl) (im) (jn) &
359 \ (cf) (dh) (ek) (gl) (im) (jn) &
360 \ (ce) (dh) (fk) (gl) (im) (jn) \\
361 \ (cd) (eh) (fk) (gl) (im) (jn) &
362 \ (cf) (dl) (eg) (hk) (im) (jn) &
363 \ (cf) (dg) (el) (hk) (im) (jn) &
364 \ (ce) (dg) (fl) (hk) (im) (jn) \\
365 \ (cd) (eg) (fl) (hk) (im) (jn) &
366 \ (ce) (df) (gl) (hk) (im) (jn) &
367 \ (cf) (dk) (eg) (hl) (im) (jn) &
368 \ (cf) (dg) (ek) (hl) (im) (jn) \\
369 \ (ce) (dg) (fk) (hl) (im) (jn) &
370 \ (cd) (eg) (fk) (hl) (im) (jn) &
371 \ (ce) (df) (gk) (hl) (im) (jn) &
372 \ (cd) (ej) (fn) (gm) (hi) (kl) \\
373 \ (cd) (ej) (fm) (gn) (hi) (kl) &
374 \ (cd) (en) (fi) (gm) (hj) (kl) &
375 \ (cd) (ei) (fn) (gm) (hj) (kl) &
376 \ (cd) (ei) (fm) (gn) (hj) (kl) \\
377 \ (cd) (ej) (fn) (gi) (hm) (kl) &
378 \ (cd) (en) (fi) (gj) (hm) (kl) &
379 \ (cd) (ei) (fn) (gj) (hm) (kl) &
380 \ (cf) (di) (ej) (gn) (hm) (kl) \\
381 \ (cd) (ej) (fi) (gn) (hm) (kl) &
382 \ (cd) (ei) (fj) (gn) (hm) (kl) &
383 \ (cd) (ej) (fm) (gi) (hn) (kl) &
384 \ (cd) (ei) (fm) (gj) (hn) (kl) \\
385 \ (cf) (di) (ej) (gm) (hn) (kl) &
386 \ (cd) (ej) (fi) (gm) (hn) (kl) &
387 \ (cd) (ei) (fj) (gm) (hn) (kl) &
388 \ (cd) (en) (fh) (gj) (im) (kl) \\
389 \ (cd) (eh) (fn) (gj) (im) (kl) &
390 \ (cf) (dh) (ej) (gn) (im) (kl) &
391 \ (ce) (dh) (fj) (gn) (im) (kl) &
392 \ (cd) (eh) (fj) (gn) (im) (kl) \\
393 \ (cd) (eg) (fn) (hj) (im) (kl) &
394 \ (cd) (ef) (gn) (hj) (im) (kl) &
395 \ (cf) (dg) (ej) (hn) (im) (kl) &
396 \ (cd) (eg) (fj) (hn) (im) (kl) \\
397 \ (cd) (ef) (gj) (hn) (im) (kl) &
398 \ (ce) (dh) (fm) (gj) (in) (kl) &
399 \ (cd) (eh) (fm) (gj) (in) (kl) &
400 \ (cf) (dh) (ej) (gm) (in) (kl) \\
401 \ (ce) (dh) (fj) (gm) (in) (kl) &
402 \ (cd) (eh) (fj) (gm) (in) (kl) &
403 \ (cd) (eg) (fm) (hj) (in) (kl) &
404 \ (cf) (dg) (ej) (hm) (in) (kl) \\
405 \ (cd) (ej) (fg) (hm) (in) (kl) &
406 \ (cd) (eg) (fj) (hm) (in) (kl) &
407 \ (cd) (ef) (gj) (hm) (in) (kl) &
408 \ (cd) (ei) (fn) (gh) (jm) (kl) \\
409 \ (cd) (en) (fh) (gi) (jm) (kl) &
410 \ (ce) (dh) (fn) (gi) (jm) (kl) &
411 \ (cd) (eh) (fn) (gi) (jm) (kl) &
412 \ (cf) (di) (eh) (gn) (jm) (kl) 
\end{array}\]

\newpage
\[\begin{array}{rrrr}
413 \ (cf) (dh) (ei) (gn) (jm) (kl) &
414 \ (ce) (di) (fh) (gn) (jm) (kl) &
415 \ (cd) (ei) (fh) (gn) (jm) (kl) &
416 \ (ce) (dh) (fi) (gn) (jm) (kl) \\
417 \ (cd) (eh) (fi) (gn) (jm) (kl) &
418 \ (ce) (dg) (fn) (hi) (jm) (kl) &
419 \ (cd) (eg) (fn) (hi) (jm) (kl) &
420 \ (ce) (df) (gn) (hi) (jm) (kl) \\
421 \ (cd) (ef) (gn) (hi) (jm) (kl) &
422 \ (cf) (di) (eg) (hn) (jm) (kl) &
423 \ (cf) (dg) (ei) (hn) (jm) (kl) &
424 \ (ce) (dg) (fi) (hn) (jm) (kl) \\
425 \ (cd) (eg) (fi) (hn) (jm) (kl) &
426 \ (ce) (df) (gi) (hn) (jm) (kl) &
427 \ (cd) (ef) (gi) (hn) (jm) (kl) &
428 \ (cf) (dh) (eg) (in) (jm) (kl) \\
429 \ (cf) (dg) (eh) (in) (jm) (kl) &
430 \ (cd) (eh) (fg) (in) (jm) (kl) &
431 \ (cd) (eg) (fh) (in) (jm) (kl) &
432 \ (ce) (dh) (fm) (gi) (jn) (kl) \\
433 \ (cd) (eh) (fm) (gi) (jn) (kl) &
434 \ (cf) (di) (eh) (gm) (jn) (kl) &
435 \ (cf) (dh) (ei) (gm) (jn) (kl) &
436 \ (ce) (di) (fh) (gm) (jn) (kl) \\
437 \ (cd) (ei) (fh) (gm) (jn) (kl) &
438 \ (ce) (dh) (fi) (gm) (jn) (kl) &
439 \ (cd) (eh) (fi) (gm) (jn) (kl) &
440 \ (ce) (dg) (fm) (hi) (jn) (kl) \\
441 \ (cd) (eg) (fm) (hi) (jn) (kl) &
442 \ (ce) (df) (gm) (hi) (jn) (kl) &
443 \ (cf) (di) (eg) (hm) (jn) (kl) &
444 \ (cf) (dg) (ei) (hm) (jn) (kl) \\
445 \ (cd) (ei) (fg) (hm) (jn) (kl) &
446 \ (ce) (dg) (fi) (hm) (jn) (kl) &
447 \ (cd) (eg) (fi) (hm) (jn) (kl) &
448 \ (ce) (df) (gi) (hm) (jn) (kl) \\
449 \ (cd) (ef) (gi) (hm) (jn) (kl) &
450 \ (cf) (dh) (eg) (im) (jn) (kl) &
451 \ (cf) (dg) (eh) (im) (jn) (kl) &
452 \ (cd) (eh) (fg) (im) (jn) (kl) \\
453 \ (cd) (eg) (fh) (im) (jn) (kl) &
454 \ (cd) (ej) (fn) (gl) (hi) (km) &
455 \ (cd) (ej) (fl) (gn) (hi) (km) &
456 \ (cd) (el) (fn) (gi) (hj) (km) \\
457 \ (cd) (ei) (fn) (gl) (hj) (km) &
458 \ (cd) (ei) (fl) (gn) (hj) (km) &
459 \ (cd) (ej) (fn) (gi) (hl) (km) &
460 \ (cd) (ei) (fn) (gj) (hl) (km) \\
461 \ (cf) (dj) (ei) (gn) (hl) (km) &
462 \ (cf) (di) (ej) (gn) (hl) (km) &
463 \ (cd) (ej) (fi) (gn) (hl) (km) &
464 \ (ce) (di) (fj) (gn) (hl) (km) \\
465 \ (cd) (ei) (fj) (gn) (hl) (km) &
466 \ (cd) (ej) (fl) (gi) (hn) (km) &
467 \ (cd) (ei) (fl) (gj) (hn) (km) &
468 \ (cf) (dj) (ei) (gl) (hn) (km) \\
469 \ (cf) (di) (ej) (gl) (hn) (km) &
470 \ (cd) (ej) (fi) (gl) (hn) (km) &
471 \ (ce) (di) (fj) (gl) (hn) (km) &
472 \ (cd) (ei) (fj) (gl) (hn) (km) \\
473 \ (cd) (eg) (fl) (hn) (ij) (km) &
474 \ (cd) (ef) (gl) (hn) (ij) (km) &
475 \ (cg) (dh) (ej) (fn) (il) (km) &
476 \ (cd) (eh) (fn) (gj) (il) (km) \\
477 \ (cf) (dj) (eh) (gn) (il) (km) &
478 \ (cf) (dh) (ej) (gn) (il) (km) &
479 \ (ce) (dh) (fj) (gn) (il) (km) &
480 \ (cd) (eh) (fj) (gn) (il) (km) \\
481 \ (ce) (dg) (fn) (hj) (il) (km) &
482 \ (cd) (eg) (fn) (hj) (il) (km) &
483 \ (ce) (df) (gn) (hj) (il) (km) &
484 \ (cf) (dj) (eg) (hn) (il) (km) \\
485 \ (cf) (dg) (ej) (hn) (il) (km) &
486 \ (ce) (dg) (fj) (hn) (il) (km) &
487 \ (cd) (eg) (fj) (hn) (il) (km) &
488 \ (ce) (df) (gj) (hn) (il) (km) \\
489 \ (cd) (ef) (gj) (hn) (il) (km) &
490 \ (ce) (dh) (fl) (gj) (in) (km) &
491 \ (cd) (eh) (fl) (gj) (in) (km) &
492 \ (cf) (dj) (eh) (gl) (in) (km) \\
493 \ (cf) (dh) (ej) (gl) (in) (km) &
494 \ (ce) (dh) (fj) (gl) (in) (km) &
495 \ (cd) (eh) (fj) (gl) (in) (km) &
496 \ (cd) (eg) (fl) (hj) (in) (km) \\
497 \ (cf) (dj) (eg) (hl) (in) (km) &
498 \ (cf) (dg) (ej) (hl) (in) (km) &
499 \ (cd) (ej) (fg) (hl) (in) (km) &
500 \ (ce) (dg) (fj) (hl) (in) (km) \\
501 \ (cd) (eg) (fj) (hl) (in) (km) &
502 \ (ce) (df) (gj) (hl) (in) (km) &
503 \ (cd) (ef) (gj) (hl) (in) (km) &
504 \ (cg) (dh) (ei) (fn) (jl) (km) \\
505 \ (cd) (ei) (fn) (gh) (jl) (km) &
506 \ (cd) (eh) (fn) (gi) (jl) (km) &
507 \ (cf) (di) (eh) (gn) (jl) (km) &
508 \ (cf) (dh) (ei) (gn) (jl) (km) \\
509 \ (ce) (di) (fh) (gn) (jl) (km) &
510 \ (cd) (ei) (fh) (gn) (jl) (km) &
511 \ (ce) (dh) (fi) (gn) (jl) (km) &
512 \ (cd) (eh) (fi) (gn) (jl) (km) \\
513 \ (cd) (ef) (gn) (hi) (jl) (km) &
514 \ (cf) (di) (eg) (hn) (jl) (km) &
515 \ (cf) (dg) (ei) (hn) (jl) (km) &
516 \ (cd) (eg) (fi) (hn) (jl) (km) \\
517 \ (cd) (ef) (gi) (hn) (jl) (km) &
518 \ (cf) (dh) (eg) (in) (jl) (km) &
519 \ (cf) (dg) (eh) (in) (jl) (km) &
520 \ (cd) (eh) (fg) (in) (jl) (km) \\
521 \ (cd) (eg) (fh) (in) (jl) (km) &
522 \ (ce) (dh) (fl) (gi) (jn) (km) &
523 \ (cd) (eh) (fl) (gi) (jn) (km) &
524 \ (cf) (di) (eh) (gl) (jn) (km) \\
525 \ (cf) (dh) (ei) (gl) (jn) (km) &
526 \ (ce) (di) (fh) (gl) (jn) (km) &
527 \ (ce) (dh) (fi) (gl) (jn) (km) &
528 \ (cd) (eh) (fi) (gl) (jn) (km) \\
529 \ (ce) (df) (gl) (hi) (jn) (km) &
530 \ (cf) (di) (eg) (hl) (jn) (km) &
531 \ (cf) (dg) (ei) (hl) (jn) (km) &
532 \ (cd) (eg) (fi) (hl) (jn) (km) \\
533 \ (ce) (df) (gi) (hl) (jn) (km) &
534 \ (cd) (ef) (gi) (hl) (jn) (km) &
535 \ (cf) (dh) (eg) (il) (jn) (km) &
536 \ (cf) (dg) (eh) (il) (jn) (km) \\
537 \ (cd) (eh) (fg) (il) (jn) (km) &
538 \ (ce) (dg) (fh) (il) (jn) (km) &
539 \ (cd) (eg) (fh) (il) (jn) (km) &
540 \ (ce) (df) (gh) (il) (jn) (km) \\
541 \ (cd) (ef) (gh) (il) (jn) (km) &
542 \ (cf) (di) (ej) (gm) (hl) (kn) &
543 \ (ce) (di) (fj) (gm) (hl) (kn) &
544 \ (ce) (di) (fj) (gl) (hm) (kn) \\
545 \ (cd) (eg) (fl) (hm) (ij) (kn) &
546 \ (cf) (dh) (ej) (gm) (il) (kn) &
547 \ (ce) (dh) (fj) (gm) (il) (kn) &
548 \ (cd) (eh) (fj) (gm) (il) (kn) \\
549 \ (ce) (dg) (fm) (hj) (il) (kn) &
550 \ (cd) (eg) (fm) (hj) (il) (kn) &
551 \ (ce) (df) (gm) (hj) (il) (kn) &
552 \ (cf) (dg) (ej) (hm) (il) (kn) \\
553 \ (ce) (dg) (fj) (hm) (il) (kn) &
554 \ (cd) (eg) (fj) (hm) (il) (kn) &
555 \ (ce) (df) (gj) (hm) (il) (kn) &
556 \ (ce) (dh) (fl) (gj) (im) (kn) \\
557 \ (ce) (dh) (fj) (gl) (im) (kn) &
558 \ (cd) (eg) (fl) (hj) (im) (kn) &
559 \ (cf) (dg) (ej) (hl) (im) (kn) &
560 \ (ce) (dg) (fj) (hl) (im) (kn) \\
561 \ (cd) (eg) (fj) (hl) (im) (kn) &
562 \ (ce) (df) (gj) (hl) (im) (kn) &
563 \ (ce) (dh) (fm) (gi) (jl) (kn) &
564 \ (cf) (di) (eh) (gm) (jl) (kn) \\
565 \ (cf) (dh) (ei) (gm) (jl) (kn) &
566 \ (ce) (di) (fh) (gm) (jl) (kn) &
567 \ (ce) (dh) (fi) (gm) (jl) (kn) &
568 \ (cd) (eh) (fi) (gm) (jl) (kn) \\
569 \ (cf) (dg) (ei) (hm) (jl) (kn) &
570 \ (cd) (eg) (fi) (hm) (jl) (kn) &
571 \ (ce) (df) (gi) (hm) (jl) (kn) &
572 \ (cd) (ef) (gi) (hm) (jl) (kn) \\
573 \ (cf) (dg) (eh) (im) (jl) (kn) &
574 \ (cd) (eh) (fg) (im) (jl) (kn) &
575 \ (cd) (eg) (fh) (im) (jl) (kn) &
576 \ (ce) (dh) (fl) (gi) (jm) (kn) \\
577 \ (ce) (di) (fh) (gl) (jm) (kn) &
578 \ (ce) (dh) (fi) (gl) (jm) (kn) &
579 \ (ce) (df) (gl) (hi) (jm) (kn) &
580 \ (ce) (dg) (fi) (hl) (jm) (kn) \\
581 \ (cd) (eg) (fi) (hl) (jm) (kn) &
582 \ (ce) (df) (gi) (hl) (jm) (kn) &
583 \ (cd) (ef) (gi) (hl) (jm) (kn) &
584 \ (cd) (eh) (fg) (il) (jm) (kn) \\
585 \ (cd) (eg) (fh) (il) (jm) (kn) &
586 \ (cd) (ej) (fn) (gk) (hi) (lm) &
587 \ (cd) (ej) (fk) (gn) (hi) (lm) &
588 \ (cd) (ei) (fn) (gk) (hj) (lm) \\
589 \ (cd) (ei) (fk) (gn) (hj) (lm) &
590 \ (cd) (ej) (fn) (gi) (hk) (lm) &
591 \ (cd) (ei) (fn) (gj) (hk) (lm) &
592 \ (cd) (ej) (fi) (gn) (hk) (lm) \\
593 \ (cd) (ei) (fj) (gn) (hk) (lm) &
594 \ (cd) (ej) (fk) (gi) (hn) (lm) &
595 \ (cd) (ei) (fk) (gj) (hn) (lm) &
596 \ (cd) (ej) (fi) (gk) (hn) (lm) \\
597 \ (cd) (ei) (fj) (gk) (hn) (lm) &
598 \ (cd) (eg) (fn) (hk) (ij) (lm) &
599 \ (cd) (eg) (fk) (hn) (ij) (lm) &
600 \ (cd) (ef) (gk) (hn) (ij) (lm) \\
601 \ (cd) (eh) (fn) (gj) (ik) (lm) &
602 \ (cd) (eh) (fj) (gn) (ik) (lm) &
603 \ (cd) (eg) (fn) (hj) (ik) (lm) &
604 \ (ce) (dg) (fj) (hn) (ik) (lm) \\
605 \ (cd) (eg) (fj) (hn) (ik) (lm) &
606 \ (ce) (df) (gj) (hn) (ik) (lm) &
607 \ (cd) (ef) (gj) (hn) (ik) (lm) &
608 \ (cd) (eh) (fk) (gj) (in) (lm) \\
609 \ (ce) (dh) (fj) (gk) (in) (lm) &
610 \ (cd) (eh) (fj) (gk) (in) (lm) &
611 \ (cd) (eg) (fk) (hj) (in) (lm) &
612 \ (cd) (ef) (gk) (hj) (in) (lm) \\
613 \ (ce) (dg) (fj) (hk) (in) (lm) &
614 \ (cd) (eg) (fj) (hk) (in) (lm) &
615 \ (ce) (df) (gj) (hk) (in) (lm) &
616 \ (cd) (ef) (gj) (hk) (in) (lm) \\
617 \ (cd) (eg) (fi) (hn) (jk) (lm) &
618 \ (cd) (eg) (fh) (in) (jk) (lm) &
619 \ (cd) (eh) (fk) (gi) (jn) (lm) &
620 \ (ce) (dh) (fi) (gk) (jn) (lm) \\
621 \ (cd) (eh) (fi) (gk) (jn) (lm) &
622 \ (cd) (eg) (fk) (hi) (jn) (lm) &
623 \ (ce) (dg) (fi) (hk) (jn) (lm) &
624 \ (cd) (eg) (fi) (hk) (jn) (lm) \\
625 \ (ce) (df) (gi) (hk) (jn) (lm) &
626 \ (cd) (ef) (gi) (hk) (jn) (lm) &
627 \ (cd) (eg) (fh) (ik) (jn) (lm) &
628 \ (cd) (ef) (gh) (ik) (jn) (lm) \\
629 \ (cd) (eg) (fj) (hi) (kn) (lm) &
630 \ (ce) (df) (gj) (hi) (kn) (lm) &
631 \ (ce) (dg) (fi) (hj) (kn) (lm) &
632 \ (cd) (eg) (fi) (hj) (kn) (lm) \\
633 \ (ce) (df) (gi) (hj) (kn) (lm) &
634 \ (cd) (ef) (gi) (hj) (kn) (lm) &
635 \ (cd) (eg) (fh) (ij) (kn) (lm) &
636 \ (ce) (di) (fm) (gj) (hk) (ln) \\
637 \ (ce) (di) (fj) (gm) (hk) (ln) &
638 \ (ce) (dh) (fk) (gm) (ij) (ln) &
639 \ (ce) (dg) (fj) (hm) (ik) (ln) &
640 \ (cd) (eg) (fj) (hm) (ik) (ln) \\
641 \ (ce) (df) (gj) (hm) (ik) (ln) &
642 \ (ce) (dg) (fj) (hk) (im) (ln) &
643 \ (ce) (df) (gj) (hk) (im) (ln) &
644 \ (cd) (eg) (fi) (hm) (jk) (ln) \\
645 \ (cd) (eg) (fh) (im) (jk) (ln) &
646 \ (ce) (df) (gi) (hk) (jm) (ln) &
647 \ (cd) (eg) (fh) (ik) (jm) (ln) &
648 \ (cf) (dg) (en) (hl) (ik) (jm) \\
649 \ (cg) (dh) (en) (fi) (jl) (km) &
650 \ (ci) (dk) (em) (fn) (gl) (hj) &
\end{array}\]

}

\newpage
\section{The image of the bracket map}\label{append:bracket}

The brackets of the following $649$ elements generate a $649$-dimensional subspace $W$ of 
$\mathfrak{h}_{g,1} (12)^{\mathrm{Sp}} \cong \mathbb{Q}^{650}$ for $g \ge 8$. 
In the computation, three types of bracket maps are used, where Type 1 and Type 2 concern with 
$[\hg (1), \hg (11)]$ and Type 3 with $[\hg(3), \hg(9)]$.

\bigskip
\noindent
\underline{Type 1} \quad 
$1$--$469: [S(a_1, b_1, a_8), S(b_8, X)]=S(a_1, b_1, X)$, where $X$ is a sequence  
obtained by permuting $\{a_2, b_2, \ldots, a_7, b_7\}$ as follows: 

{\tiny
\noindent

\begin{align*}
&\text{$1$--$102$}\\
& \{a_2, a_3, b_2, b_3, a_4, b_4, a_5, a_6, b_6, b_5, a_7, b_7\}, \  
\{a_2, a_3, b_2, b_3, a_4, b_4, a_5, a_6, a_7, b_5, b_6, b_7\}, \  
\{a_2, a_3, b_2, b_3, a_4, a_5, b_4, a_6, b_5, b_6, a_7, b_7\}, \\  
& \{a_2, a_3, b_2, b_3, a_4, a_5, b_4, a_6, b_6, b_5, a_7, b_7\}, \  
\{a_2, a_3, b_2, b_3, a_4, a_5, b_4, a_6, a_7, b_5, b_6, b_7\}, \  
\{a_2, a_3, b_2, b_3, a_4, a_5, a_6, b_4, b_5, b_6, a_7, b_7\}, \\  
& \{a_2, a_3, b_2, b_3, a_4, a_5, a_6, b_4, b_6, b_5, a_7, b_7\}, \  
\{a_2, a_3, b_2, b_3, a_4, a_5, a_6, b_4, a_7, b_5, b_6, b_7\}, \  
\{a_2, a_3, b_2, b_3, a_4, a_5, a_6, b_5, b_4, b_6, a_7, b_7\}, \\  
& \{a_2, a_3, b_2, a_4, b_3, b_4, a_5, a_6, b_6, b_5, a_7, b_7\}, \  
\{a_2, a_3, b_2, a_4, b_3, b_4, a_5, a_6, a_7, b_5, b_6, b_7\}, \  
\{a_2, a_3, b_2, a_4, b_3, a_5, b_4, a_6, b_6, b_5, a_7, b_7\}, \\  
& \{a_2, a_3, b_2, a_4, b_3, a_5, b_4, a_6, a_7, b_5, b_6, b_7\}, \  
\{a_2, a_3, b_2, a_4, b_3, a_5, a_6, b_4, b_5, b_6, a_7, b_7\}, \  
\{a_2, a_3, b_2, a_4, b_3, a_5, a_6, b_4, b_6, b_5, a_7, b_7\}, \\  
& \{a_2, a_3, b_2, a_4, b_3, a_5, a_6, b_4, a_7, b_5, b_6, b_7\}, \  
\{a_2, a_3, b_2, a_4, b_3, a_5, b_5, a_6, b_4, b_6, a_7, b_7\}, \  
\{a_2, a_3, b_2, a_4, b_3, a_5, a_6, b_5, b_4, b_6, a_7, b_7\}, \\  
& \{a_2, a_3, b_2, a_4, b_3, a_5, a_6, a_7, b_4, b_5, b_6, b_7\}, \  
\{a_2, a_3, b_2, a_4, b_3, a_5, b_5, a_6, b_6, b_4, a_7, b_7\}, \  
\{a_2, a_3, b_2, a_4, b_3, a_5, b_5, a_6, a_7, b_4, b_6, b_7\}, \\  
& \{a_2, a_3, b_2, a_4, b_3, a_5, a_6, b_5, a_7, b_4, b_6, b_7\}, \  
\{a_2, a_3, b_2, a_4, b_4, b_3, a_5, a_6, b_6, b_5, a_7, b_7\}, \  
\{a_2, a_3, b_2, a_4, b_4, b_3, a_5, a_6, a_7, b_5, b_6, b_7\}, \\  
& \{a_2, a_3, b_2, a_4, a_5, b_3, b_4, a_6, b_6, b_5, a_7, b_7\}, \  
\{a_2, a_3, b_2, a_4, a_5, b_3, b_4, a_6, a_7, b_5, b_6, b_7\}, \  
\{a_2, a_3, b_2, a_4, a_5, b_3, a_6, b_4, b_5, b_6, a_7, b_7\}, \\  
& \{a_2, a_3, b_2, a_4, a_5, b_3, a_6, b_4, b_6, b_5, a_7, b_7\}, \  
\{a_2, a_3, b_2, a_4, a_5, b_3, a_6, b_4, a_7, b_5, b_6, b_7\}, \  
\{a_2, a_3, b_2, a_4, a_5, b_3, b_5, a_6, b_4, b_6, a_7, b_7\}, \\  
& \{a_2, a_3, b_2, a_4, a_5, b_3, a_6, b_5, b_4, b_6, a_7, b_7\}, \  
\{a_2, a_3, b_2, a_4, a_5, b_3, a_6, b_6, b_4, b_5, a_7, b_7\}, \  
\{a_2, a_3, b_2, a_4, a_5, b_3, a_6, a_7, b_4, b_5, b_6, b_7\}, \\  
& \{a_2, a_3, b_2, a_4, a_5, b_3, a_6, a_7, b_4, b_6, b_5, b_7\}, \  
\{a_2, a_3, b_2, a_4, a_5, b_3, b_5, a_6, b_6, b_4, a_7, b_7\}, \  
\{a_2, a_3, b_2, a_4, a_5, b_3, b_5, a_6, a_7, b_4, b_6, b_7\}, \\  
& \{a_2, a_3, b_2, a_4, a_5, b_3, a_6, b_5, b_6, b_4, a_7, b_7\}, \  
\{a_2, a_3, b_2, a_4, a_5, b_3, a_6, b_5, a_7, b_4, b_6, b_7\}, \  
\{a_2, a_3, b_2, a_4, b_4, a_5, b_3, a_6, b_6, b_5, a_7, b_7\}, \\  
& \{a_2, a_3, b_2, a_4, b_4, a_5, b_3, a_6, a_7, b_5, b_6, b_7\}, \  
\{a_2, a_3, b_2, a_4, a_5, b_4, b_3, a_6, b_6, b_5, a_7, b_7\}, \  
\{a_2, a_3, b_2, a_4, a_5, b_4, b_3, a_6, a_7, b_5, b_6, b_7\}, \\  
& \{a_2, a_3, b_2, a_4, a_5, a_6, b_3, b_4, b_5, b_6, a_7, b_7\}, \  
\{a_2, a_3, b_2, a_4, a_5, a_6, b_3, b_4, b_6, b_5, a_7, b_7\}, \  
\{a_2, a_3, b_2, a_4, a_5, a_6, b_3, b_4, a_7, b_5, b_6, b_7\}, \\  
& \{a_2, a_3, b_2, a_4, a_5, a_6, b_3, b_5, b_4, b_6, a_7, b_7\}, \  
\{a_2, a_3, b_2, a_4, a_5, a_6, b_3, b_6, b_4, b_5, a_7, b_7\}, \  
\{a_2, a_3, b_2, a_4, a_5, a_6, b_3, a_7, b_4, b_5, b_6, b_7\}, \\  
& \{a_2, a_3, b_2, a_4, a_5, a_6, b_3, a_7, b_4, b_6, b_5, b_7\}, \  
\{a_2, a_3, b_2, a_4, a_5, b_5, b_3, a_6, b_6, b_4, a_7, b_7\}, \  
\{a_2, a_3, b_2, a_4, a_5, b_5, b_3, a_6, a_7, b_4, b_6, b_7\}, \\  
& \{a_2, a_3, b_2, a_4, a_5, a_6, b_3, b_5, b_6, b_4, a_7, b_7\}, \  
\{a_2, a_3, b_2, a_4, a_5, a_6, b_3, b_5, a_7, b_4, b_6, b_7\}, \  
\{a_2, a_3, b_2, a_4, a_5, a_6, b_3, b_6, b_5, b_4, a_7, b_7\}, \\  
& \{a_2, a_3, b_2, a_4, a_5, a_6, b_3, a_7, b_5, b_4, b_6, b_7\}, \  
\{a_2, a_3, b_2, a_4, a_5, a_6, b_3, a_7, b_5, b_6, b_4, b_7\}, \  
\{a_2, a_3, b_2, a_4, b_4, a_5, a_6, b_3, b_5, b_6, a_7, b_7\}, \\  
& \{a_2, a_3, b_2, a_4, b_4, a_5, a_6, b_3, b_6, b_5, a_7, b_7\}, \  
\{a_2, a_3, b_2, a_4, b_4, a_5, a_6, b_3, a_7, b_5, b_6, b_7\}, \  
\{a_2, a_3, b_2, a_4, a_5, b_4, a_6, b_3, b_5, b_6, a_7, b_7\}, \\  
& \{a_2, a_3, b_2, a_4, a_5, b_4, a_6, b_3, b_6, b_5, a_7, b_7\}, \  
\{a_2, a_3, b_2, a_4, a_5, b_4, a_6, b_3, a_7, b_5, b_6, b_7\}, \  
\{a_2, a_3, b_2, a_4, a_5, a_6, b_4, b_3, b_5, b_6, a_7, b_7\}, \\  
& \{a_2, a_3, b_2, a_4, a_5, a_6, b_4, b_3, b_6, b_5, a_7, b_7\}, \  
\{a_2, a_3, b_2, a_4, a_5, a_6, b_4, b_3, a_7, b_5, b_6, b_7\}, \  
\{a_2, a_3, b_2, a_4, a_5, b_5, a_6, b_3, b_4, b_6, a_7, b_7\}, \\  
& \{a_2, a_3, b_2, a_4, a_5, a_6, b_5, b_3, b_4, b_6, a_7, b_7\}, \  
\{a_2, a_3, b_2, a_4, a_5, a_6, b_6, b_3, b_4, b_5, a_7, b_7\}, \  
\{a_2, a_3, b_2, a_4, a_5, a_6, a_7, b_3, b_4, b_5, b_6, b_7\}, \\  
& \{a_2, a_3, b_2, a_4, a_5, a_6, a_7, b_3, b_4, b_6, b_5, b_7\}, \  
\{a_2, a_3, b_2, a_4, a_5, b_5, a_6, b_3, b_6, b_4, a_7, b_7\}, \  
\{a_2, a_3, b_2, a_4, a_5, b_5, a_6, b_3, a_7, b_4, b_6, b_7\}, \\  
& \{a_2, a_3, b_2, a_4, a_5, a_6, b_5, b_3, b_6, b_4, a_7, b_7\}, \  
\{a_2, a_3, b_2, a_4, a_5, a_6, b_5, b_3, a_7, b_4, b_6, b_7\}, \  
\{a_2, a_3, b_2, a_4, a_5, a_6, b_6, b_3, b_5, b_4, a_7, b_7\}, \\  
& \{a_2, a_3, b_2, a_4, a_5, a_6, a_7, b_3, b_5, b_4, b_6, b_7\}, \  
\{a_2, a_3, b_2, a_4, a_5, a_6, a_7, b_3, b_5, b_6, b_4, b_7\}, \  
\{a_2, a_3, b_2, a_4, a_5, a_6, a_7, b_3, b_6, b_4, b_5, b_7\}, \\  
& \{a_2, a_3, b_2, a_4, a_5, a_6, a_7, b_3, b_6, b_5, b_4, b_7\}, \  
\{a_2, a_3, b_2, a_4, b_4, a_5, a_6, b_5, b_3, b_6, a_7, b_7\}, \  
\{a_2, a_3, b_2, a_4, b_4, a_5, a_6, a_7, b_3, b_5, b_6, b_7\}, \\  
& \{a_2, a_3, b_2, a_4, a_5, b_4, a_6, b_5, b_3, b_6, a_7, b_7\}, \  
\{a_2, a_3, b_2, a_4, a_5, b_4, a_6, a_7, b_3, b_5, b_6, b_7\}, \  
\{a_2, a_3, b_2, a_4, a_5, b_4, a_6, a_7, b_3, b_6, b_5, b_7\}, \\  
& \{a_2, a_3, b_2, a_4, a_5, a_6, b_4, b_5, b_3, b_6, a_7, b_7\}, \  
\{a_2, a_3, b_2, a_4, a_5, a_6, b_4, b_6, b_3, b_5, a_7, b_7\}, \  
\{a_2, a_3, b_2, a_4, a_5, a_6, b_4, a_7, b_3, b_5, b_6, b_7\}, \\  
& \{a_2, a_3, b_2, a_4, a_5, a_6, b_4, a_7, b_3, b_6, b_5, b_7\}, \  
\{a_2, a_3, b_2, a_4, a_5, a_6, b_5, a_7, b_3, b_4, b_6, b_7\}, \  
\{a_2, a_3, b_2, a_4, b_4, a_5, b_5, a_6, b_6, b_3, a_7, b_7\}, \\  
& \{a_2, a_3, b_2, a_4, b_4, a_5, b_5, a_6, a_7, b_3, b_6, b_7\}, \  
\{a_2, a_3, b_2, a_4, b_4, a_5, a_6, b_5, a_7, b_3, b_6, b_7\}, \  
\{a_2, a_3, b_2, a_4, a_5, b_4, b_5, a_6, a_7, b_3, b_6, b_7\}, \\  
& \{a_2, a_3, b_2, a_4, a_5, b_4, a_6, b_5, a_7, b_3, b_6, b_7\}, \  
\{a_2, a_3, b_3, b_2, a_4, b_4, a_5, a_6, a_7, b_5, b_6, b_7\}, \  
\{a_2, a_3, b_3, b_2, a_4, a_5, b_4, a_6, a_7, b_5, b_6, b_7\}, \\  
& \{a_2, a_3, b_3, b_2, a_4, a_5, a_6, b_4, b_5, b_6, a_7, b_7\}, \  
\{a_2, a_3, b_3, b_2, a_4, a_5, a_6, b_4, b_6, b_5, a_7, b_7\}, \  
\{a_2, a_3, b_3, b_2, a_4, a_5, a_6, b_4, a_7, b_5, b_6, b_7\}, \\  
& \{a_2, a_3, b_3, b_2, a_4, a_5, a_6, b_5, b_4, b_6, a_7, b_7\}, \  
\{a_2, a_3, a_4, b_2, b_3, b_4, a_5, a_6, a_7, b_5, b_6, b_7\}, \  
\{a_2, a_3, a_4, b_2, b_3, a_5, b_4, a_6, b_6, b_5, a_7, b_7\}, 
\end{align*}

\begin{align*} 
&\text{$103$--$228$}\\
& \{a_2, a_3, a_4, b_2, b_3, a_5, b_4, a_6, a_7, b_5, b_6, b_7\}, \  
\{a_2, a_3, a_4, b_2, b_3, a_5, a_6, b_4, b_6, b_5, a_7, b_7\}, \  
\{a_2, a_3, a_4, b_2, b_3, a_5, a_6, b_4, a_7, b_5, b_6, b_7\}, \\  
& \{a_2, a_3, a_4, b_2, b_3, a_5, a_6, b_5, b_4, b_6, a_7, b_7\}, \  
\{a_2, a_3, a_4, b_2, b_3, a_5, a_6, a_7, b_4, b_5, b_6, b_7\}, \  
\{a_2, a_3, a_4, b_2, b_3, a_5, b_5, a_6, b_6, b_4, a_7, b_7\}, \\  
& \{a_2, a_3, a_4, b_2, b_3, a_5, b_5, a_6, a_7, b_4, b_6, b_7\}, \  
\{a_2, a_3, a_4, b_2, b_3, a_5, a_6, b_5, a_7, b_4, b_6, b_7\}, \  
\{a_2, a_3, a_4, b_2, b_4, b_3, a_5, a_6, a_7, b_5, b_6, b_7\}, \\  
& \{a_2, a_3, a_4, b_2, a_5, b_3, b_4, a_6, b_6, b_5, a_7, b_7\}, \  
\{a_2, a_3, a_4, b_2, a_5, b_3, b_4, a_6, a_7, b_5, b_6, b_7\}, \  
\{a_2, a_3, a_4, b_2, a_5, b_3, a_6, b_4, b_6, b_5, a_7, b_7\}, \\  
& \{a_2, a_3, a_4, b_2, a_5, b_3, a_6, b_4, a_7, b_5, b_6, b_7\}, \  
\{a_2, a_3, a_4, b_2, a_5, b_3, a_6, b_5, b_4, b_6, a_7, b_7\}, \  
\{a_2, a_3, a_4, b_2, a_5, b_3, a_6, b_6, b_4, b_5, a_7, b_7\}, \\  
& \{a_2, a_3, a_4, b_2, a_5, b_3, a_6, a_7, b_4, b_5, b_6, b_7\}, \  
\{a_2, a_3, a_4, b_2, a_5, b_3, a_6, a_7, b_4, b_6, b_5, b_7\}, \  
\{a_2, a_3, a_4, b_2, a_5, b_3, b_5, a_6, b_6, b_4, a_7, b_7\}, \\ 
& \{a_2, a_3, a_4, b_2, a_5, b_3, b_5, a_6, a_7, b_4, b_6, b_7\}, \  
\{a_2, a_3, a_4, b_2, a_5, b_3, a_6, b_5, b_6, b_4, a_7, b_7\}, \  
\{a_2, a_3, a_4, b_2, a_5, b_3, a_6, b_5, a_7, b_4, b_6, b_7\}, \\  
& \{a_2, a_3, a_4, b_2, b_4, a_5, b_3, a_6, b_6, b_5, a_7, b_7\}, \  
\{a_2, a_3, a_4, b_2, b_4, a_5, b_3, a_6, a_7, b_5, b_6, b_7\}, \  
\{a_2, a_3, a_4, b_2, a_5, b_4, b_3, a_6, b_6, b_5, a_7, b_7\}, \\  
& \{a_2, a_3, a_4, b_2, a_5, b_4, b_3, a_6, a_7, b_5, b_6, b_7\}, \  
\{a_2, a_3, a_4, b_2, a_5, a_6, b_3, b_4, b_5, b_6, a_7, b_7\}, \  
\{a_2, a_3, a_4, b_2, a_5, a_6, b_3, b_4, b_6, b_5, a_7, b_7\}, \\  
& \{a_2, a_3, a_4, b_2, a_5, a_6, b_3, b_4, a_7, b_5, b_6, b_7\}, \  
\{a_2, a_3, a_4, b_2, a_5, a_6, b_3, b_5, b_4, b_6, a_7, b_7\}, \  
\{a_2, a_3, a_4, b_2, a_5, a_6, b_3, b_6, b_4, b_5, a_7, b_7\}, \\  
& \{a_2, a_3, a_4, b_2, a_5, a_6, b_3, a_7, b_4, b_5, b_6, b_7\}, \  
\{a_2, a_3, a_4, b_2, a_5, a_6, b_3, a_7, b_4, b_6, b_5, b_7\}, \  
\{a_2, a_3, a_4, b_2, a_5, b_5, b_3, a_6, b_6, b_4, a_7, b_7\}, \\  
& \{a_2, a_3, a_4, b_2, a_5, b_5, b_3, a_6, a_7, b_4, b_6, b_7\}, \  
\{a_2, a_3, a_4, b_2, a_5, a_6, b_3, b_5, b_6, b_4, a_7, b_7\}, \  
\{a_2, a_3, a_4, b_2, a_5, a_6, b_3, b_5, a_7, b_4, b_6, b_7\}, \\  
& \{a_2, a_3, a_4, b_2, a_5, a_6, b_3, b_6, b_5, b_4, a_7, b_7\}, \  
\{a_2, a_3, a_4, b_2, a_5, a_6, b_3, a_7, b_5, b_4, b_6, b_7\}, \  
\{a_2, a_3, a_4, b_2, a_5, a_6, b_3, a_7, b_5, b_6, b_4, b_7\}, \\  
& \{a_2, a_3, a_4, b_2, b_4, a_5, a_6, b_3, b_6, b_5, a_7, b_7\}, \  
\{a_2, a_3, a_4, b_2, b_4, a_5, a_6, b_3, a_7, b_5, b_6, b_7\}, \  
\{a_2, a_3, a_4, b_2, a_5, b_4, a_6, b_3, b_6, b_5, a_7, b_7\}, \\  
& \{a_2, a_3, a_4, b_2, a_5, b_4, a_6, b_3, a_7, b_5, b_6, b_7\}, \  
\{a_2, a_3, a_4, b_2, a_5, a_6, b_4, b_3, b_5, b_6, a_7, b_7\}, \  
\{a_2, a_3, a_4, b_2, a_5, a_6, b_4, b_3, b_6, b_5, a_7, b_7\}, \\  
& \{a_2, a_3, a_4, b_2, a_5, a_6, b_4, b_3, a_7, b_5, b_6, b_7\}, \  
\{a_2, a_3, a_4, b_2, a_5, a_6, b_5, b_3, b_4, b_6, a_7, b_7\}, \  
\{a_2, a_3, a_4, b_2, a_5, a_6, b_6, b_3, b_4, b_5, a_7, b_7\}, \\  
& \{a_2, a_3, a_4, b_2, a_5, a_6, a_7, b_3, b_4, b_5, b_6, b_7\}, \  
\{a_2, a_3, a_4, b_2, a_5, a_6, a_7, b_3, b_4, b_6, b_5, b_7\}, \  
\{a_2, a_3, a_4, b_2, a_5, b_5, a_6, b_3, b_6, b_4, a_7, b_7\}, \\  
& \{a_2, a_3, a_4, b_2, a_5, b_5, a_6, b_3, a_7, b_4, b_6, b_7\}, \  
\{a_2, a_3, a_4, b_2, a_5, a_6, b_5, b_3, b_6, b_4, a_7, b_7\}, \  
\{a_2, a_3, a_4, b_2, a_5, a_6, b_5, b_3, a_7, b_4, b_6, b_7\}, \\  
& \{a_2, a_3, a_4, b_2, a_5, a_6, b_6, b_3, b_5, b_4, a_7, b_7\}, \  
\{a_2, a_3, a_4, b_2, a_5, a_6, a_7, b_3, b_5, b_4, b_6, b_7\}, \  
\{a_2, a_3, a_4, b_2, a_5, a_6, a_7, b_3, b_5, b_6, b_4, b_7\}, \\  
& \{a_2, a_3, a_4, b_2, a_5, a_6, b_6, b_3, a_7, b_4, b_5, b_7\}, \  
\{a_2, a_3, a_4, b_2, a_5, a_6, a_7, b_3, b_6, b_4, b_5, b_7\}, \  
\{a_2, a_3, a_4, b_2, a_5, a_6, a_7, b_3, b_6, b_5, b_4, b_7\}, \\  
& \{a_2, a_3, a_4, b_2, b_4, a_5, a_6, a_7, b_3, b_5, b_6, b_7\}, \  
\{a_2, a_3, a_4, b_2, a_5, b_4, a_6, a_7, b_3, b_5, b_6, b_7\}, \  
\{a_2, a_3, a_4, b_2, a_5, b_4, a_6, a_7, b_3, b_6, b_5, b_7\}, \\  
& \{a_2, a_3, a_4, b_2, a_5, a_6, b_4, b_5, b_3, b_6, a_7, b_7\}, \  
\{a_2, a_3, a_4, b_2, a_5, a_6, b_4, b_6, b_3, b_5, a_7, b_7\}, \  
\{a_2, a_3, a_4, b_2, a_5, a_6, b_4, a_7, b_3, b_5, b_6, b_7\}, \\  
& \{a_2, a_3, a_4, b_2, a_5, a_6, b_4, a_7, b_3, b_6, b_5, b_7\}, \  
\{a_2, a_3, a_4, b_2, a_5, b_5, a_6, b_6, b_3, b_4, a_7, b_7\}, \  
\{a_2, a_3, a_4, b_2, a_5, b_5, a_6, a_7, b_3, b_4, b_6, b_7\}, \\  
& \{a_2, a_3, a_4, b_2, a_5, b_5, a_6, a_7, b_3, b_6, b_4, b_7\}, \  
\{a_2, a_3, a_4, b_2, a_5, a_6, b_5, b_6, b_3, b_4, a_7, b_7\}, \  
\{a_2, a_3, a_4, b_2, a_5, a_6, b_5, a_7, b_3, b_4, b_6, b_7\}, \\  
& \{a_2, a_3, a_4, b_2, a_5, a_6, b_5, a_7, b_3, b_6, b_4, b_7\}, \  
\{a_2, a_3, a_4, b_2, a_5, a_6, a_7, b_6, b_3, b_4, b_5, b_7\}, \  
\{a_2, a_3, a_4, b_2, b_4, a_5, b_5, a_6, b_6, b_3, a_7, b_7\}, \\  
& \{a_2, a_3, a_4, b_2, b_4, a_5, b_5, a_6, a_7, b_3, b_6, b_7\}, \  
\{a_2, a_3, a_4, b_2, b_4, a_5, a_6, b_5, a_7, b_3, b_6, b_7\}, \  
\{a_2, a_3, a_4, b_2, a_5, b_4, b_5, a_6, b_6, b_3, a_7, b_7\}, \\  
& \{a_2, a_3, a_4, b_2, a_5, b_4, b_5, a_6, a_7, b_3, b_6, b_7\}, \  
\{a_2, a_3, a_4, b_2, a_5, b_4, a_6, b_5, b_6, b_3, a_7, b_7\}, \  
\{a_2, a_3, a_4, b_2, a_5, b_4, a_6, b_5, a_7, b_3, b_6, b_7\}, \\  
& \{a_2, a_3, b_3, a_4, b_2, a_5, b_4, a_6, a_7, b_5, b_6, b_7\}, \  
\{a_2, a_3, b_3, a_4, b_2, a_5, a_6, b_4, b_6, b_5, a_7, b_7\}, \  
\{a_2, a_3, b_3, a_4, b_2, a_5, a_6, b_4, a_7, b_5, b_6, b_7\}, \\  
& \{a_2, a_3, b_3, a_4, b_2, a_5, a_6, a_7, b_4, b_5, b_6, b_7\}, \  
\{a_2, a_3, b_3, a_4, b_2, a_5, b_5, a_6, a_7, b_4, b_6, b_7\}, \  
\{a_2, a_3, b_3, a_4, b_2, a_5, a_6, b_5, a_7, b_4, b_6, b_7\}, \\  
& \{a_2, a_3, a_4, b_3, b_2, b_4, a_5, a_6, a_7, b_5, b_6, b_7\}, \  
\{a_2, a_3, a_4, b_3, b_2, a_5, b_4, a_6, a_7, b_5, b_6, b_7\}, \  
\{a_2, a_3, a_4, b_3, b_2, a_5, a_6, b_4, a_7, b_5, b_6, b_7\}, \\  
& \{a_2, a_3, a_4, b_3, b_2, a_5, a_6, a_7, b_4, b_5, b_6, b_7\}, \  
\{a_2, a_3, a_4, b_3, b_2, a_5, b_5, a_6, b_6, b_4, a_7, b_7\}, \  
\{a_2, a_3, a_4, b_3, b_2, a_5, b_5, a_6, a_7, b_4, b_6, b_7\}, \\  
& \{a_2, a_3, a_4, b_3, b_2, a_5, a_6, b_5, a_7, b_4, b_6, b_7\}, \  
\{a_2, a_3, a_4, a_5, b_2, b_3, b_4, a_6, b_6, b_5, a_7, b_7\}, \  
\{a_2, a_3, a_4, a_5, b_2, b_3, b_4, a_6, a_7, b_5, b_6, b_7\}, \\  
& \{a_2, a_3, a_4, a_5, b_2, b_3, a_6, b_4, b_6, b_5, a_7, b_7\}, \  
\{a_2, a_3, a_4, a_5, b_2, b_3, a_6, b_4, a_7, b_5, b_6, b_7\}, \  
\{a_2, a_3, a_4, a_5, b_2, b_3, a_6, b_6, b_4, b_5, a_7, b_7\}, \\  
& \{a_2, a_3, a_4, a_5, b_2, b_3, a_6, a_7, b_4, b_5, b_6, b_7\}, \  
\{a_2, a_3, a_4, a_5, b_2, b_3, a_6, a_7, b_4, b_6, b_5, b_7\}, \  
\{a_2, a_3, a_4, a_5, b_2, b_3, b_5, a_6, b_6, b_4, a_7, b_7\}, \\  
& \{a_2, a_3, a_4, a_5, b_2, b_3, b_5, a_6, a_7, b_4, b_6, b_7\}, \  
\{a_2, a_3, a_4, a_5, b_2, b_3, a_6, b_5, b_6, b_4, a_7, b_7\}, \  
\{a_2, a_3, a_4, a_5, b_2, b_3, a_6, b_5, a_7, b_4, b_6, b_7\}, \\  
& \{a_2, a_3, a_4, a_5, b_2, b_4, b_3, a_6, b_6, b_5, a_7, b_7\}, \  
\{a_2, a_3, a_4, a_5, b_2, b_4, b_3, a_6, a_7, b_5, b_6, b_7\}, \  
\{a_2, a_3, a_4, a_5, b_2, a_6, b_3, b_4, b_6, b_5, a_7, b_7\}, \\  
& \{a_2, a_3, a_4, a_5, b_2, a_6, b_3, b_4, a_7, b_5, b_6, b_7\}, \  
\{a_2, a_3, a_4, a_5, b_2, a_6, b_3, b_5, b_4, b_6, a_7, b_7\}, \  
\{a_2, a_3, a_4, a_5, b_2, a_6, b_3, b_6, b_4, b_5, a_7, b_7\}, \\  
& \{a_2, a_3, a_4, a_5, b_2, a_6, b_3, a_7, b_4, b_5, b_6, b_7\}, \  
\{a_2, a_3, a_4, a_5, b_2, a_6, b_3, a_7, b_4, b_6, b_5, b_7\}, \  
\{a_2, a_3, a_4, a_5, b_2, b_5, b_3, a_6, b_6, b_4, a_7, b_7\}, \\  
& \{a_2, a_3, a_4, a_5, b_2, b_5, b_3, a_6, a_7, b_4, b_6, b_7\}, \  
\{a_2, a_3, a_4, a_5, b_2, a_6, b_3, b_5, b_6, b_4, a_7, b_7\}, \  
\{a_2, a_3, a_4, a_5, b_2, a_6, b_3, b_5, a_7, b_4, b_6, b_7\}, \\  
& \{a_2, a_3, a_4, a_5, b_2, a_6, b_3, b_6, b_5, b_4, a_7, b_7\}, \  
\{a_2, a_3, a_4, a_5, b_2, a_6, b_3, a_7, b_5, b_4, b_6, b_7\}, \  
\{a_2, a_3, a_4, a_5, b_2, a_6, b_3, a_7, b_5, b_6, b_4, b_7\}, \\  
& \{a_2, a_3, a_4, a_5, b_2, b_4, a_6, b_3, b_6, b_5, a_7, b_7\}, \  
\{a_2, a_3, a_4, a_5, b_2, b_4, a_6, b_3, a_7, b_5, b_6, b_7\}, \  
\{a_2, a_3, a_4, a_5, b_2, a_6, b_4, b_3, b_6, b_5, a_7, b_7\}, \\  
& \{a_2, a_3, a_4, a_5, b_2, a_6, b_4, b_3, a_7, b_5, b_6, b_7\}, \  
\{a_2, a_3, a_4, a_5, b_2, a_6, b_6, b_3, b_4, b_5, a_7, b_7\}, \  
\{a_2, a_3, a_4, a_5, b_2, a_6, a_7, b_3, b_4, b_5, b_6, b_7\}, 
\end{align*}

\begin{align*} 
&\text{$229$--$354$}\\
& \{a_2, a_3, a_4, a_5, b_2, a_6, a_7, b_3, b_4, b_6, b_5, b_7\}, \  
\{a_2, a_3, a_4, a_5, b_2, b_5, a_6, b_3, b_6, b_4, a_7, b_7\}, \  
\{a_2, a_3, a_4, a_5, b_2, b_5, a_6, b_3, a_7, b_4, b_6, b_7\}, \\  
& \{a_2, a_3, a_4, a_5, b_2, a_6, b_5, b_3, b_6, b_4, a_7, b_7\}, \  
\{a_2, a_3, a_4, a_5, b_2, a_6, b_5, b_3, a_7, b_4, b_6, b_7\}, \  
\{a_2, a_3, a_4, a_5, b_2, a_6, b_6, b_3, b_5, b_4, a_7, b_7\}, \\  
& \{a_2, a_3, a_4, a_5, b_2, a_6, a_7, b_3, b_5, b_4, b_6, b_7\}, \  
\{a_2, a_3, a_4, a_5, b_2, a_6, a_7, b_3, b_5, b_6, b_4, b_7\}, \  
\{a_2, a_3, a_4, a_5, b_2, a_6, b_6, b_3, a_7, b_4, b_5, b_7\}, \\  
& \{a_2, a_3, a_4, a_5, b_2, a_6, a_7, b_3, b_6, b_4, b_5, b_7\}, \  
\{a_2, a_3, a_4, a_5, b_2, a_6, a_7, b_3, b_6, b_5, b_4, b_7\}, \  
\{a_2, a_3, a_4, a_5, b_2, b_4, a_6, b_6, b_3, b_5, a_7, b_7\}, \\  
& \{a_2, a_3, a_4, a_5, b_2, b_4, a_6, a_7, b_3, b_5, b_6, b_7\}, \  
\{a_2, a_3, a_4, a_5, b_2, b_4, a_6, a_7, b_3, b_6, b_5, b_7\}, \  
\{a_2, a_3, a_4, a_5, b_2, a_6, b_4, b_6, b_3, b_5, a_7, b_7\}, \\  
& \{a_2, a_3, a_4, a_5, b_2, a_6, b_4, a_7, b_3, b_5, b_6, b_7\}, \  
\{a_2, a_3, a_4, a_5, b_2, a_6, b_4, a_7, b_3, b_6, b_5, b_7\}, \  
\{a_2, a_3, a_4, a_5, b_2, a_6, a_7, b_4, b_3, b_5, b_6, b_7\}, \\  
& \{a_2, a_3, a_4, a_5, b_2, a_6, a_7, b_4, b_3, b_6, b_5, b_7\}, \  
\{a_2, a_3, a_4, a_5, b_2, b_5, a_6, b_6, b_3, b_4, a_7, b_7\}, \  
\{a_2, a_3, a_4, a_5, b_2, b_5, a_6, a_7, b_3, b_4, b_6, b_7\}, \\  
& \{a_2, a_3, a_4, a_5, b_2, b_5, a_6, a_7, b_3, b_6, b_4, b_7\}, \  
\{a_2, a_3, a_4, a_5, b_2, a_6, b_5, b_6, b_3, b_4, a_7, b_7\}, \  
\{a_2, a_3, a_4, a_5, b_2, a_6, b_5, a_7, b_3, b_4, b_6, b_7\}, \\  
& \{a_2, a_3, a_4, a_5, b_2, a_6, b_5, a_7, b_3, b_6, b_4, b_7\}, \  
\{a_2, a_3, a_4, a_5, b_2, a_6, b_6, a_7, b_3, b_4, b_5, b_7\}, \  
\{a_2, a_3, a_4, a_5, b_2, a_6, b_6, a_7, b_3, b_5, b_4, b_7\}, \\  
& \{a_2, a_3, a_4, a_5, b_2, a_6, a_7, b_6, b_3, b_4, b_5, b_7\}, \  
\{a_2, a_3, a_4, a_5, b_2, a_6, a_7, b_6, b_3, b_5, b_4, b_7\}, \  
\{a_2, a_3, a_4, a_5, b_2, b_4, b_5, a_6, b_6, b_3, a_7, b_7\}, \\  
& \{a_2, a_3, a_4, a_5, b_2, b_4, b_5, a_6, a_7, b_3, b_6, b_7\}, \  
\{a_2, a_3, a_4, a_5, b_2, b_4, a_6, b_5, b_6, b_3, a_7, b_7\}, \  
\{a_2, a_3, a_4, a_5, b_2, b_4, a_6, b_5, a_7, b_3, b_6, b_7\}, \\  
& \{a_2, a_3, a_4, a_5, b_2, b_5, b_4, a_6, b_6, b_3, a_7, b_7\}, \  
\{a_2, a_3, a_4, a_5, b_2, b_5, b_4, a_6, a_7, b_3, b_6, b_7\}, \  
\{a_2, a_3, a_4, a_5, b_2, a_6, b_4, b_5, b_6, b_3, a_7, b_7\}, \\  
& \{a_2, a_3, a_4, a_5, b_2, a_6, b_4, b_5, a_7, b_3, b_6, b_7\}, \  
\{a_2, a_3, a_4, a_5, b_2, a_6, b_4, b_6, b_5, b_3, a_7, b_7\}, \  
\{a_2, a_3, a_4, a_5, b_2, a_6, b_4, a_7, b_5, b_3, b_6, b_7\}, \\  
& \{a_2, a_3, a_4, a_5, b_2, a_6, b_4, a_7, b_5, b_6, b_3, b_7\}, \  
\{a_2, a_3, b_3, a_4, a_5, b_2, b_4, a_6, a_7, b_5, b_6, b_7\}, \  
\{a_2, a_3, b_3, a_4, a_5, b_2, a_6, b_4, b_6, b_5, a_7, b_7\}, \\  
& \{a_2, a_3, b_3, a_4, a_5, b_2, a_6, b_4, a_7, b_5, b_6, b_7\}, \  
\{a_2, a_3, b_3, a_4, a_5, b_2, a_6, a_7, b_4, b_5, b_6, b_7\}, \  
\{a_2, a_3, b_3, a_4, a_5, b_2, a_6, a_7, b_4, b_6, b_5, b_7\}, \\  
& \{a_2, a_3, b_3, a_4, a_5, b_2, b_5, a_6, b_6, b_4, a_7, b_7\}, \  
\{a_2, a_3, b_3, a_4, a_5, b_2, b_5, a_6, a_7, b_4, b_6, b_7\}, \  
\{a_2, a_3, b_3, a_4, a_5, b_2, a_6, b_5, b_6, b_4, a_7, b_7\}, \\  
& \{a_2, a_3, b_3, a_4, a_5, b_2, a_6, b_5, a_7, b_4, b_6, b_7\}, \  
\{a_2, a_3, a_4, b_3, a_5, b_2, b_4, a_6, a_7, b_5, b_6, b_7\}, \  
\{a_2, a_3, a_4, b_3, a_5, b_2, a_6, b_4, a_7, b_5, b_6, b_7\}, \\  
& \{a_2, a_3, a_4, b_3, a_5, b_2, a_6, a_7, b_4, b_5, b_6, b_7\}, \  
\{a_2, a_3, a_4, b_3, a_5, b_2, a_6, a_7, b_4, b_6, b_5, b_7\}, \  
\{a_2, a_3, a_4, b_3, a_5, b_2, b_5, a_6, b_6, b_4, a_7, b_7\}, \\  
& \{a_2, a_3, a_4, b_3, a_5, b_2, b_5, a_6, a_7, b_4, b_6, b_7\}, \  
\{a_2, a_3, a_4, b_3, a_5, b_2, a_6, b_5, b_6, b_4, a_7, b_7\}, \  
\{a_2, a_3, a_4, b_3, a_5, b_2, a_6, b_5, a_7, b_4, b_6, b_7\}, \\  
& \{a_2, a_3, a_4, a_5, b_3, b_2, b_4, a_6, b_6, b_5, a_7, b_7\}, \  
\{a_2, a_3, a_4, a_5, b_3, b_2, b_4, a_6, a_7, b_5, b_6, b_7\}, \  
\{a_2, a_3, a_4, a_5, b_3, b_2, a_6, b_4, b_6, b_5, a_7, b_7\}, \\  
& \{a_2, a_3, a_4, a_5, b_3, b_2, a_6, b_4, a_7, b_5, b_6, b_7\}, \  
\{a_2, a_3, a_4, a_5, b_3, b_2, a_6, a_7, b_4, b_5, b_6, b_7\}, \  
\{a_2, a_3, a_4, a_5, b_3, b_2, a_6, a_7, b_4, b_6, b_5, b_7\}, \\  
& \{a_2, a_3, a_4, a_5, b_3, b_2, b_5, a_6, b_6, b_4, a_7, b_7\}, \  
\{a_2, a_3, a_4, a_5, b_3, b_2, b_5, a_6, a_7, b_4, b_6, b_7\}, \  
\{a_2, a_3, a_4, a_5, b_3, b_2, a_6, b_5, b_6, b_4, a_7, b_7\}, \\  
& \{a_2, a_3, a_4, a_5, b_3, b_2, a_6, b_5, a_7, b_4, b_6, b_7\}, \  
\{a_2, a_3, a_4, a_5, b_4, b_2, b_3, a_6, b_6, b_5, a_7, b_7\}, \  
\{a_2, a_3, a_4, a_5, b_4, b_2, b_3, a_6, a_7, b_5, b_6, b_7\}, \\  
& \{a_2, a_3, a_4, a_5, a_6, b_2, b_3, b_4, b_6, b_5, a_7, b_7\}, \  
\{a_2, a_3, a_4, a_5, a_6, b_2, b_3, b_4, a_7, b_5, b_6, b_7\}, \  
\{a_2, a_3, a_4, a_5, a_6, b_2, b_3, b_5, b_4, b_6, a_7, b_7\}, \\  
& \{a_2, a_3, a_4, a_5, a_6, b_2, b_3, b_6, b_4, b_5, a_7, b_7\}, \  
\{a_2, a_3, a_4, a_5, a_6, b_2, b_3, a_7, b_4, b_5, b_6, b_7\}, \  
\{a_2, a_3, a_4, a_5, a_6, b_2, b_3, a_7, b_4, b_6, b_5, b_7\}, \\  
& \{a_2, a_3, a_4, a_5, b_5, b_2, b_3, a_6, a_7, b_4, b_6, b_7\}, \  
\{a_2, a_3, a_4, a_5, a_6, b_2, b_3, b_5, b_6, b_4, a_7, b_7\}, \  
\{a_2, a_3, a_4, a_5, a_6, b_2, b_3, b_5, a_7, b_4, b_6, b_7\}, \\  
& \{a_2, a_3, a_4, a_5, a_6, b_2, b_3, b_6, b_5, b_4, a_7, b_7\}, \  
\{a_2, a_3, a_4, a_5, a_6, b_2, b_3, a_7, b_5, b_4, b_6, b_7\}, \  
\{a_2, a_3, a_4, a_5, a_6, b_2, b_3, a_7, b_5, b_6, b_4, b_7\}, \\  
& \{a_2, a_3, a_4, a_5, b_4, b_2, a_6, b_3, b_6, b_5, a_7, b_7\}, \  
\{a_2, a_3, a_4, a_5, b_4, b_2, a_6, b_3, a_7, b_5, b_6, b_7\}, \  
\{a_2, a_3, a_4, a_5, a_6, b_2, b_4, b_3, b_6, b_5, a_7, b_7\}, \\  
& \{a_2, a_3, a_4, a_5, a_6, b_2, b_4, b_3, a_7, b_5, b_6, b_7\}, \  
\{a_2, a_3, a_4, a_5, a_6, b_2, b_6, b_3, b_4, b_5, a_7, b_7\}, \  
\{a_2, a_3, a_4, a_5, a_6, b_2, a_7, b_3, b_4, b_5, b_6, b_7\}, \\  
& \{a_2, a_3, a_4, a_5, a_6, b_2, a_7, b_3, b_4, b_6, b_5, b_7\}, \  
\{a_2, a_3, a_4, a_5, b_5, b_2, a_6, b_3, b_6, b_4, a_7, b_7\}, \  
\{a_2, a_3, a_4, a_5, b_5, b_2, a_6, b_3, a_7, b_4, b_6, b_7\}, \\  
& \{a_2, a_3, a_4, a_5, a_6, b_2, b_5, b_3, b_6, b_4, a_7, b_7\}, \  
\{a_2, a_3, a_4, a_5, a_6, b_2, b_5, b_3, a_7, b_4, b_6, b_7\}, \  
\{a_2, a_3, a_4, a_5, a_6, b_2, b_6, b_3, b_5, b_4, a_7, b_7\}, \\  
& \{a_2, a_3, a_4, a_5, a_6, b_2, a_7, b_3, b_5, b_4, b_6, b_7\}, \  
\{a_2, a_3, a_4, a_5, a_6, b_2, a_7, b_3, b_5, b_6, b_4, b_7\}, \  
\{a_2, a_3, a_4, a_5, a_6, b_2, b_6, b_3, a_7, b_4, b_5, b_7\}, \\  
& \{a_2, a_3, a_4, a_5, a_6, b_2, a_7, b_3, b_6, b_4, b_5, b_7\}, \  
\{a_2, a_3, a_4, a_5, a_6, b_2, a_7, b_3, b_6, b_5, b_4, b_7\}, \  
\{a_2, a_3, a_4, b_4, a_5, b_2, a_6, a_7, b_3, b_5, b_6, b_7\}, \\  
& \{a_2, a_3, a_4, b_4, a_5, b_2, a_6, a_7, b_3, b_6, b_5, b_7\}, \  
\{a_2, a_3, a_4, a_5, b_4, b_2, a_6, b_6, b_3, b_5, a_7, b_7\}, \  
\{a_2, a_3, a_4, a_5, b_4, b_2, a_6, a_7, b_3, b_5, b_6, b_7\}, \\  
& \{a_2, a_3, a_4, a_5, b_4, b_2, a_6, a_7, b_3, b_6, b_5, b_7\}, \  
\{a_2, a_3, a_4, a_5, a_6, b_2, b_4, b_6, b_3, b_5, a_7, b_7\}, \  
\{a_2, a_3, a_4, a_5, a_6, b_2, b_4, a_7, b_3, b_5, b_6, b_7\}, \\  
& \{a_2, a_3, a_4, a_5, a_6, b_2, b_4, a_7, b_3, b_6, b_5, b_7\}, \  
\{a_2, a_3, a_4, a_5, a_6, b_2, b_6, b_4, b_3, b_5, a_7, b_7\}, \  
\{a_2, a_3, a_4, a_5, a_6, b_2, a_7, b_4, b_3, b_5, b_6, b_7\}, \\  
& \{a_2, a_3, a_4, a_5, a_6, b_2, a_7, b_4, b_3, b_6, b_5, b_7\}, \  
\{a_2, a_3, a_4, a_5, a_6, b_2, b_5, b_6, b_3, b_4, a_7, b_7\}, \  
\{a_2, a_3, a_4, a_5, a_6, b_2, b_5, a_7, b_3, b_4, b_6, b_7\}, \\  
& \{a_2, a_3, a_4, a_5, a_6, b_2, b_5, a_7, b_3, b_6, b_4, b_7\}, \  
\{a_2, a_3, a_4, a_5, a_6, b_2, b_6, b_5, b_3, b_4, a_7, b_7\}, \  
\{a_2, a_3, a_4, a_5, a_6, b_2, a_7, b_5, b_3, b_4, b_6, b_7\}, \\  
& \{a_2, a_3, a_4, a_5, a_6, b_2, a_7, b_5, b_3, b_6, b_4, b_7\}, \  
\{a_2, a_3, a_4, a_5, a_6, b_2, b_6, a_7, b_3, b_4, b_5, b_7\}, \  
\{a_2, a_3, a_4, a_5, a_6, b_2, b_6, a_7, b_3, b_5, b_4, b_7\}, \\
& \{a_2, a_3, a_4, a_5, a_6, b_2, a_7, b_6, b_3, b_4, b_5, b_7\}, \  
\{a_2, a_3, a_4, a_5, a_6, b_2, a_7, b_6, b_3, b_5, b_4, b_7\}, \  
\{a_2, a_3, a_4, a_5, a_6, b_2, a_7, b_7, b_3, b_4, b_5, b_6\}, \\  
& \{a_2, a_3, a_4, a_5, a_6, b_2, a_7, b_7, b_3, b_5, b_4, b_6\}, \  
\{a_2, a_3, a_4, b_4, a_5, b_2, b_5, a_6, a_7, b_3, b_6, b_7\}, \  
\{a_2, a_3, a_4, b_4, a_5, b_2, a_6, b_5, a_7, b_3, b_6, b_7\}, \\  
& \{a_2, a_3, a_4, a_5, b_4, b_2, b_5, a_6, b_6, b_3, a_7, b_7\}, \  
\{a_2, a_3, a_4, a_5, b_4, b_2, b_5, a_6, a_7, b_3, b_6, b_7\}, \  
\{a_2, a_3, a_4, a_5, b_4, b_2, a_6, b_5, b_6, b_3, a_7, b_7\}, 
\end{align*}

\begin{align*} 
&\text{$355$--$469$}\\
& \{a_2, a_3, a_4, a_5, b_4, b_2, a_6, b_5, a_7, b_3, b_6, b_7\}, \  
\{a_2, a_3, a_4, a_5, a_6, b_2, b_4, b_5, b_6, b_3, a_7, b_7\}, \  
\{a_2, a_3, a_4, a_5, a_6, b_2, b_4, b_5, a_7, b_3, b_6, b_7\}, \\  
& \{a_2, a_3, a_4, a_5, a_6, b_2, b_4, b_6, b_5, b_3, a_7, b_7\}, \  
\{a_2, a_3, a_4, a_5, a_6, b_2, b_4, a_7, b_5, b_3, b_6, b_7\}, \  
\{a_2, a_3, a_4, a_5, a_6, b_2, b_4, a_7, b_5, b_6, b_3, b_7\}, \\  
& \{a_2, a_3, a_4, a_5, a_6, b_2, b_5, b_4, b_6, b_3, a_7, b_7\}, \  
\{a_2, a_3, a_4, a_5, a_6, b_2, b_5, b_4, a_7, b_3, b_6, b_7\}, \  
\{a_2, a_3, a_4, a_5, a_6, b_2, b_6, b_4, b_5, b_3, a_7, b_7\}, \\  
& \{a_2, a_3, a_4, a_5, a_6, b_2, a_7, b_4, b_5, b_3, b_6, b_7\}, \  
\{a_2, a_3, a_4, a_5, a_6, b_2, a_7, b_4, b_5, b_6, b_3, b_7\}, \  
\{a_2, a_3, a_4, a_5, a_6, b_2, b_6, b_4, a_7, b_3, b_5, b_7\}, \\  
& \{a_2, a_3, a_4, a_5, a_6, b_2, a_7, b_4, b_6, b_3, b_5, b_7\}, \  
\{a_2, a_3, a_4, a_5, a_6, b_2, a_7, b_4, b_6, b_5, b_3, b_7\}, \  
\{a_2, a_3, b_3, a_4, b_4, a_5, b_2, a_6, a_7, b_5, b_6, b_7\}, \\  
& \{a_2, a_3, b_3, a_4, a_5, b_4, b_2, a_6, a_7, b_5, b_6, b_7\}, \  
\{a_2, a_3, b_3, a_4, a_5, a_6, b_2, b_4, a_7, b_5, b_6, b_7\}, \  
\{a_2, a_3, b_3, a_4, a_5, a_6, b_2, a_7, b_4, b_5, b_6, b_7\}, \\  
& \{a_2, a_3, b_3, a_4, a_5, a_6, b_2, a_7, b_4, b_6, b_5, b_7\}, \  
\{a_2, a_3, b_3, a_4, a_5, a_6, b_2, b_5, a_7, b_4, b_6, b_7\}, \  
\{a_2, a_3, b_3, a_4, a_5, a_6, b_2, a_7, b_5, b_4, b_6, b_7\}, \\  
& \{a_2, a_3, b_3, a_4, a_5, a_6, b_2, a_7, b_5, b_6, b_4, b_7\}, \  
\{a_2, a_3, a_4, b_3, b_4, a_5, b_2, a_6, a_7, b_5, b_6, b_7\}, \  
\{a_2, a_3, a_4, b_3, a_5, b_4, b_2, a_6, a_7, b_5, b_6, b_7\}, \\  
& \{a_2, a_3, a_4, b_3, a_5, a_6, b_2, b_4, a_7, b_5, b_6, b_7\}, \  
\{a_2, a_3, a_4, b_3, a_5, a_6, b_2, a_7, b_4, b_5, b_6, b_7\}, \  
\{a_2, a_3, a_4, b_3, a_5, a_6, b_2, a_7, b_4, b_6, b_5, b_7\}, \\  
& \{a_2, a_3, a_4, b_3, a_5, a_6, b_2, b_5, b_6, b_4, a_7, b_7\}, \  
\{a_2, a_3, a_4, b_3, a_5, a_6, b_2, b_5, a_7, b_4, b_6, b_7\}, \  
\{a_2, a_3, a_4, b_3, a_5, a_6, b_2, a_7, b_5, b_4, b_6, b_7\}, \\  
& \{a_2, a_3, a_4, b_3, a_5, a_6, b_2, a_7, b_5, b_6, b_4, b_7\}, \  
\{a_2, a_3, a_4, a_5, b_3, b_4, b_2, a_6, a_7, b_5, b_6, b_7\}, \  
\{a_2, a_3, a_4, a_5, b_3, a_6, b_2, b_4, a_7, b_5, b_6, b_7\}, \\  
& \{a_2, a_3, a_4, a_5, b_3, a_6, b_2, b_6, b_4, b_5, a_7, b_7\}, \  
\{a_2, a_3, a_4, a_5, b_3, a_6, b_2, a_7, b_4, b_5, b_6, b_7\}, \  
\{a_2, a_3, a_4, a_5, b_3, a_6, b_2, a_7, b_4, b_6, b_5, b_7\}, \\  
& \{a_2, a_3, a_4, a_5, b_3, b_5, b_2, a_6, a_7, b_4, b_6, b_7\}, \  
\{a_2, a_3, a_4, a_5, b_3, a_6, b_2, b_5, b_6, b_4, a_7, b_7\}, \  
\{a_2, a_3, a_4, a_5, b_3, a_6, b_2, b_5, a_7, b_4, b_6, b_7\}, \\  
& \{a_2, a_3, a_4, a_5, b_3, a_6, b_2, b_6, b_5, b_4, a_7, b_7\}, \  
\{a_2, a_3, a_4, a_5, b_3, a_6, b_2, a_7, b_5, b_4, b_6, b_7\}, \  
\{a_2, a_3, a_4, a_5, b_3, a_6, b_2, a_7, b_5, b_6, b_4, b_7\}, \\  
& \{a_2, a_3, a_4, a_5, a_6, b_3, b_2, b_4, a_7, b_5, b_6, b_7\}, \  
\{a_2, a_3, a_4, a_5, a_6, b_3, b_2, b_6, b_4, b_5, a_7, b_7\}, \  
\{a_2, a_3, a_4, a_5, a_6, b_3, b_2, a_7, b_4, b_5, b_6, b_7\}, \\  
& \{a_2, a_3, a_4, a_5, a_6, b_3, b_2, a_7, b_4, b_6, b_5, b_7\}, \  
\{a_2, a_3, a_4, a_5, a_6, b_3, b_2, b_5, b_6, b_4, a_7, b_7\}, \  
\{a_2, a_3, a_4, a_5, a_6, b_3, b_2, b_5, a_7, b_4, b_6, b_7\}, \\  
& \{a_2, a_3, a_4, a_5, a_6, b_3, b_2, a_7, b_5, b_4, b_6, b_7\}, \  
\{a_2, a_3, a_4, a_5, a_6, b_3, b_2, a_7, b_5, b_6, b_4, b_7\}, \  
\{a_2, a_3, a_4, a_5, b_4, a_6, b_2, b_3, a_7, b_5, b_6, b_7\}, \\  
& \{a_2, a_3, a_4, a_5, a_6, b_4, b_2, b_3, a_7, b_5, b_6, b_7\}, \  
\{a_2, a_3, a_4, a_5, a_6, a_7, b_2, b_3, b_4, b_5, b_6, b_7\}, \  
\{a_2, a_3, a_4, a_5, a_6, a_7, b_2, b_3, b_4, b_6, b_5, b_7\}, \\  
& \{a_2, a_3, a_4, a_5, b_5, a_6, b_2, b_3, a_7, b_4, b_6, b_7\}, \  
\{a_2, a_3, a_4, a_5, a_6, b_5, b_2, b_3, b_6, b_4, a_7, b_7\}, \  
\{a_2, a_3, a_4, a_5, a_6, b_5, b_2, b_3, a_7, b_4, b_6, b_7\}, \\  
& \{a_2, a_3, a_4, a_5, a_6, a_7, b_2, b_3, b_5, b_6, b_4, b_7\}, \  
\{a_2, a_3, a_4, a_5, a_6, a_7, b_2, b_3, b_6, b_4, b_5, b_7\}, \  
\{a_2, a_3, a_4, a_5, a_6, a_7, b_2, b_3, b_6, b_5, b_4, b_7\}, \\  
& \{a_2, a_3, a_4, a_5, b_4, a_6, b_2, a_7, b_3, b_5, b_6, b_7\}, \  
\{a_2, a_3, a_4, a_5, b_4, a_6, b_2, a_7, b_3, b_6, b_5, b_7\}, \  
\{a_2, a_3, a_4, a_5, a_6, b_4, b_2, a_7, b_3, b_5, b_6, b_7\}, \\  
& \{a_2, a_3, a_4, a_5, a_6, b_4, b_2, a_7, b_3, b_6, b_5, b_7\}, \  
\{a_2, a_3, a_4, a_5, a_6, a_7, b_2, b_4, b_3, b_5, b_6, b_7\}, \  
\{a_2, a_3, a_4, a_5, a_6, a_7, b_2, b_4, b_3, b_6, b_5, b_7\}, \\  
& \{a_2, a_3, a_4, a_5, a_6, b_5, b_2, a_7, b_3, b_4, b_6, b_7\}, \  
\{a_2, a_3, a_4, a_5, a_6, b_5, b_2, a_7, b_3, b_6, b_4, b_7\}, \  
\{a_2, a_3, a_4, a_5, a_6, a_7, b_2, b_5, b_3, b_4, b_6, b_7\}, \\  
& \{a_2, a_3, a_4, a_5, a_6, a_7, b_2, b_5, b_3, b_6, b_4, b_7\}, \  
\{a_2, a_3, a_4, a_5, a_6, a_7, b_2, b_6, b_3, b_4, b_5, b_7\}, \  
\{a_2, a_3, a_4, a_5, a_6, a_7, b_2, b_6, b_3, b_5, b_4, b_7\}, \\  
& \{a_2, a_3, a_4, a_5, a_6, a_7, b_2, b_7, b_3, b_4, b_5, b_6\}, \  
\{a_2, a_3, a_4, a_5, a_6, a_7, b_2, b_7, b_3, b_5, b_4, b_6\}, \  
\{a_2, a_3, a_4, a_5, b_4, a_6, b_2, b_5, a_7, b_3, b_6, b_7\}, \\  
& \{a_2, a_3, a_4, a_5, b_4, a_6, b_2, a_7, b_5, b_3, b_6, b_7\}, \  
\{a_2, a_3, a_4, a_5, a_6, b_4, b_2, b_5, a_7, b_3, b_6, b_7\}, \  
\{a_2, a_3, a_4, a_5, a_6, b_4, b_2, a_7, b_5, b_3, b_6, b_7\}, \\  
& \{a_2, a_3, a_4, a_5, a_6, a_7, b_2, b_4, b_5, b_3, b_6, b_7\}, \  
\{a_2, a_3, a_4, a_5, a_6, a_7, b_2, b_4, b_5, b_6, b_3, b_7\}, \  
\{a_2, a_3, a_4, a_5, a_6, a_7, b_2, b_4, b_6, b_3, b_5, b_7\}, \\  
& \{a_2, a_3, a_4, a_5, a_6, a_7, b_2, b_4, b_6, b_5, b_3, b_7\}, \  
\{a_2, a_3, a_4, a_5, a_6, a_7, b_2, b_5, b_4, b_6, b_3, b_7\}, \  
\{a_2, a_3, b_3, a_4, a_5, b_4, a_6, b_2, a_7, b_5, b_6, b_7\}, \\  
& \{a_2, a_3, b_3, a_4, a_5, a_6, a_7, b_2, b_4, b_5, b_6, b_7\}, \  
\{a_2, a_3, b_3, a_4, a_5, a_6, a_7, b_2, b_4, b_6, b_5, b_7\}, \  
\{a_2, a_3, a_4, b_3, b_4, a_5, a_6, b_2, a_7, b_5, b_6, b_7\}, \\  
& \{a_2, a_3, a_4, b_3, a_5, b_4, a_6, b_2, a_7, b_5, b_6, b_7\}, \  
\{a_2, a_3, a_4, b_3, a_5, a_6, b_4, b_2, a_7, b_5, b_6, b_7\}, \  
\{a_2, a_3, a_4, b_3, a_5, a_6, a_7, b_2, b_4, b_5, b_6, b_7\}, \\  
& \{a_2, a_3, a_4, b_3, a_5, a_6, a_7, b_2, b_4, b_6, b_5, b_7\}, \  
\{a_2, a_3, a_4, b_3, a_5, a_6, a_7, b_2, b_5, b_4, b_6, b_7\}, \  
\{a_2, a_3, a_4, b_3, a_5, a_6, a_7, b_2, b_5, b_6, b_4, b_7\}, \\  
& \{a_2, a_3, a_4, b_3, a_5, a_6, a_7, b_2, b_6, b_4, b_5, b_7\}, \  
\{a_2, a_3, a_4, b_3, a_5, a_6, a_7, b_2, b_6, b_5, b_4, b_7\}, \  
\{a_2, a_3, a_4, a_5, b_3, b_4, a_6, b_2, a_7, b_5, b_6, b_7\}, \\  
& \{a_2, a_3, a_4, a_5, b_3, a_6, b_4, b_2, a_7, b_5, b_6, b_7\}, \  
\{a_2, a_3, a_4, a_5, b_3, a_6, a_7, b_2, b_4, b_5, b_6, b_7\}, \  
\{a_2, a_3, a_4, a_5, b_3, a_6, a_7, b_2, b_4, b_6, b_5, b_7\}, \\  
& \{a_2, a_3, a_4, a_5, b_3, b_5, a_6, b_2, a_7, b_4, b_6, b_7\}, \  
\{a_2, a_3, a_4, a_5, b_3, a_6, a_7, b_2, b_5, b_4, b_6, b_7\}, \  
\{a_2, a_3, a_4, a_5, b_3, a_6, a_7, b_2, b_5, b_6, b_4, b_7\}, \\  
& \{a_2, a_3, a_4, a_5, b_3, a_6, a_7, b_2, b_6, b_4, b_5, b_7\}, \  
\{a_2, a_3, a_4, a_5, b_3, a_6, a_7, b_2, b_6, b_5, b_4, b_7\}, \  
\{a_2, a_3, a_4, a_5, b_4, b_3, a_6, b_2, a_7, b_5, b_6, b_7\}, \\  
& \{a_2, a_3, a_4, a_5, a_6, b_3, b_4, b_2, a_7, b_5, b_6, b_7\}, \  
\{a_2, a_3, a_4, a_5, a_6, b_3, a_7, b_2, b_4, b_5, b_6, b_7\}, \  
\{a_2, a_3, a_4, a_5, b_4, a_6, a_7, b_2, b_3, b_5, b_6, b_7\}, \\  
& \{a_2, a_3, a_4, a_5, b_4, a_6, a_7, b_2, b_3, b_6, b_5, b_7\}, \  
\{a_2, a_3, a_4, a_5, a_6, b_4, a_7, b_2, b_3, b_5, b_6, b_7\}, \  
\{a_2, a_3, b_3, a_4, a_5, b_4, a_6, a_7, b_2, b_6, b_5, b_7\}, \\  
& \{a_2, a_3, a_4, b_3, a_5, b_4, a_6, a_7, b_2, b_6, b_5, b_7\}, \  
\{a_2, a_3, a_4, b_3, a_5, a_6, b_4, a_7, b_2, b_5, b_6, b_7\}, \  
\{a_2, a_3, a_4, b_3, a_5, a_6, b_4, a_7, b_2, b_6, b_5, b_7\}, \\  
& \{a_2, a_3, a_4, b_3, a_5, a_6, b_5, a_7, b_2, b_4, b_6, b_7\} 
\end{align*}

}

\newpage
\noindent
\underline{Type 2} \quad 
$470$--$648: [S(a_1, a_2, a_3), S(b_3, Y, b_1)]$, where $Y$ is a word 
obtained by permuting $\{b_2, a_4, b_4, a_5, b_5,\ldots, a_8, b_8\}$ as follows: 

{\tiny
\noindent
\begin{align*}
&\text{$470$--$592$}\\
&\{a_4, a_5, b_4, b_5, a_6, a_7, b_6, a_8, b_7, b_8, b_2\}, \
\{a_4, a_5, b_4, b_5, a_6, a_7, b_6, a_8, b_2, b_7, b_8\}, \
\{a_4, a_5, b_4, b_5, a_6, a_7, b_6, b_2, a_8, b_7, b_8\}, \\
&\{a_4, a_5, b_4, b_5, a_6, a_7, a_8, b_6, b_7, b_8, b_2\}, \
\{a_4, a_5, b_4, b_5, a_6, a_7, a_8, b_6, b_8, b_7, b_2\}, \
\{a_4, a_5, b_4, b_5, a_6, a_7, a_8, b_6, b_2, b_7, b_8\}, \\
&\{a_4, a_5, b_4, b_5, a_6, a_7, b_2, b_6, a_8, b_7, b_8\}, \
\{a_4, a_5, b_4, b_5, a_6, b_2, a_7, b_6, a_8, b_7, b_8\}, \
\{a_4, a_5, b_4, b_5, a_6, a_7, a_8, b_7, b_6, b_8, b_2\}, \\
&\{a_4, a_5, b_4, b_5, a_6, a_7, a_8, b_2, b_6, b_7, b_8\}, \
\{a_4, a_5, b_4, b_5, a_6, a_7, b_2, a_8, b_6, b_7, b_8\}, \
\{a_4, a_5, b_4, b_5, a_6, a_7, a_8, b_2, b_6, b_8, b_7\}, \\
&\{a_4, a_5, b_4, b_5, a_6, a_7, b_2, a_8, b_6, b_8, b_7\}, \
\{a_4, a_5, b_4, b_5, a_6, a_7, a_8, b_7, b_2, b_6, b_8\}, \
\{a_4, a_5, b_4, b_5, a_6, a_7, b_2, b_7, a_8, b_6, b_8\}, \\
&\{a_4, a_5, b_4, b_5, b_2, a_6, a_7, a_8, b_6, b_8, b_7\}, \
\{a_4, a_5, b_4, a_6, b_5, b_6, a_7, a_8, b_2, b_7, b_8\}, \
\{a_4, a_5, b_4, a_6, b_5, b_6, a_7, b_2, a_8, b_7, b_8\}, \\
&\{a_4, a_5, b_4, a_6, b_5, a_7, b_6, a_8, b_2, b_7, b_8\}, \
\{a_4, a_5, b_4, a_6, b_5, a_7, b_6, b_2, a_8, b_7, b_8\}, \
\{a_4, a_5, b_4, a_6, b_5, a_7, a_8, b_6, b_7, b_8, b_2\}, \\
&\{a_4, a_5, b_4, a_6, b_5, a_7, a_8, b_6, b_8, b_7, b_2\}, \
\{a_4, a_5, b_4, a_6, b_5, a_7, a_8, b_6, b_2, b_7, b_8\}, \
\{a_4, a_5, b_4, a_6, b_5, a_7, b_2, b_6, a_8, b_7, b_8\}, \\
&\{a_4, a_5, b_4, a_6, b_5, a_7, a_8, b_7, b_6, b_8, b_2\}, \
\{a_4, a_5, b_4, a_6, b_5, a_7, a_8, b_2, b_6, b_7, b_8\}, \
\{a_4, a_5, b_4, a_6, b_5, a_7, b_2, a_8, b_6, b_7, b_8\}, \\
&\{a_4, a_5, b_4, a_6, b_5, a_7, a_8, b_2, b_6, b_8, b_7\}, \
\{a_4, a_5, b_4, a_6, b_5, a_7, b_2, a_8, b_6, b_8, b_7\}, \
\{a_4, a_5, b_4, a_6, b_5, a_7, a_8, b_7, b_8, b_6, b_2\}, \\
&\{a_4, a_5, b_4, a_6, b_5, a_7, a_8, b_7, b_2, b_6, b_8\}, \
\{a_4, a_5, b_4, a_6, b_5, a_7, b_2, b_7, a_8, b_6, b_8\}, \
\{a_4, a_5, b_4, b_2, b_5, a_6, a_7, a_8, b_6, b_7, b_8\}, \\
&\{a_4, a_5, b_4, b_2, b_5, a_6, a_7, a_8, b_6, b_8, b_7\}, \
\{a_4, a_5, b_4, b_2, b_5, a_6, a_7, a_8, b_7, b_6, b_8\}, \
\{a_4, a_5, b_4, b_2, b_5, a_6, a_7, a_8, b_7, b_8, b_6\}, \\
&\{a_4, a_5, b_4, a_6, a_7, b_5, b_6, a_8, b_2, b_7, b_8\}, \
\{a_4, a_5, b_4, a_6, a_7, b_5, b_6, b_2, a_8, b_7, b_8\}, \
\{a_4, a_5, b_4, a_6, a_7, b_5, a_8, b_6, b_7, b_8, b_2\}, \\
&\{a_4, a_5, b_4, a_6, a_7, b_5, a_8, b_6, b_8, b_7, b_2\}, \
\{a_4, a_5, b_4, a_6, a_7, b_5, a_8, b_6, b_2, b_7, b_8\}, \
\{a_4, a_5, b_4, a_6, a_7, b_5, b_2, b_6, a_8, b_7, b_8\}, \\
&\{a_4, a_5, b_4, a_6, a_7, b_5, b_7, a_8, b_6, b_8, b_2\}, \
\{a_4, a_5, b_4, a_6, a_7, b_5, a_8, b_7, b_6, b_8, b_2\}, \
\{a_4, a_5, b_4, a_6, a_7, b_5, a_8, b_2, b_6, b_7, b_8\}, \\
&\{a_4, a_5, b_4, a_6, a_7, b_5, b_2, a_8, b_6, b_7, b_8\}, \
\{a_4, a_5, b_4, a_6, a_7, b_5, a_8, b_2, b_6, b_8, b_7\}, \
\{a_4, a_5, b_4, a_6, a_7, b_5, b_2, a_8, b_6, b_8, b_7\}, \\
&\{a_4, a_5, b_4, a_6, b_2, b_5, a_7, a_8, b_6, b_7, b_8\}, \
\{a_4, a_5, b_4, a_6, b_2, b_5, a_7, a_8, b_6, b_8, b_7\}, \
\{a_4, a_5, b_4, a_6, a_7, b_5, b_7, a_8, b_2, b_6, b_8\}, \\
&\{a_4, a_5, b_4, a_6, a_7, b_5, b_7, b_2, a_8, b_6, b_8\}, \
\{a_4, a_5, b_4, a_6, a_7, b_5, a_8, b_7, b_8, b_6, b_2\}, \
\{a_4, a_5, b_4, a_6, a_7, b_5, a_8, b_7, b_2, b_6, b_8\}, \\
&\{a_4, a_5, b_4, a_6, a_7, b_5, b_2, b_7, a_8, b_6, b_8\}, \
\{a_4, a_5, b_4, a_6, b_2, b_5, a_7, a_8, b_7, b_6, b_8\}, \
\{a_4, a_5, b_4, a_6, b_2, b_5, a_7, a_8, b_7, b_8, b_6\}, \\
&\{a_4, a_5, b_4, b_2, a_6, b_5, a_7, a_8, b_6, b_7, b_8\}, \
\{a_4, a_5, b_4, b_2, a_6, b_5, a_7, a_8, b_6, b_8, b_7\}, \
\{a_4, a_5, b_4, b_2, a_6, b_5, a_7, a_8, b_7, b_6, b_8\}, \\
&\{a_4, a_5, b_4, b_2, a_6, b_5, a_7, a_8, b_7, b_8, b_6\}, \
\{a_4, a_5, b_4, a_6, a_7, b_6, b_5, a_8, b_2, b_7, b_8\}, \
\{a_4, a_5, b_4, a_6, a_7, b_6, b_5, b_2, a_8, b_7, b_8\}, \\
&\{a_4, a_5, b_4, a_6, a_7, a_8, b_5, b_6, b_7, b_8, b_2\}, \
\{a_4, a_5, b_4, a_6, a_7, a_8, b_5, b_6, b_8, b_7, b_2\}, \
\{a_4, a_5, b_4, a_6, a_7, a_8, b_5, b_6, b_2, b_7, b_8\}, \\
&\{a_4, a_5, b_4, a_6, a_7, a_8, b_5, b_7, b_6, b_8, b_2\}, \
\{a_4, a_5, b_4, a_6, a_7, a_8, b_5, b_8, b_6, b_7, b_2\}, \
\{a_4, a_5, b_4, a_6, a_7, a_8, b_5, b_2, b_6, b_7, b_8\}, \\
&\{a_4, a_5, b_4, a_6, a_7, b_2, b_5, a_8, b_6, b_7, b_8\}, \
\{a_4, a_5, b_4, a_6, a_7, a_8, b_5, b_2, b_6, b_8, b_7\}, \
\{a_4, a_5, b_4, a_6, a_7, b_2, b_5, a_8, b_6, b_8, b_7\}, \\
&\{a_4, a_5, b_4, a_6, b_2, a_7, b_5, a_8, b_6, b_7, b_8\}, \
\{a_4, a_5, b_4, a_6, b_2, a_7, b_5, a_8, b_6, b_8, b_7\}, \
\{a_4, a_5, b_4, a_6, a_7, a_8, b_5, b_7, b_8, b_6, b_2\}, \\
&\{a_4, a_5, b_4, a_6, a_7, a_8, b_5, b_7, b_2, b_6, b_8\}, \
\{a_4, a_5, b_4, a_6, a_7, b_2, b_5, b_7, a_8, b_6, b_8\}, \
\{a_4, a_5, b_4, a_6, a_7, a_8, b_5, b_8, b_7, b_6, b_2\}, \\
&\{a_4, a_5, b_4, a_6, a_7, a_8, b_5, b_2, b_7, b_6, b_8\}, \
\{a_4, a_5, b_4, a_6, a_7, b_2, b_5, a_8, b_7, b_6, b_8\}, \
\{a_4, a_5, b_4, a_6, a_7, a_8, b_5, b_8, b_2, b_6, b_7\}, \\
&\{a_4, a_5, b_4, a_6, b_2, a_7, b_5, b_7, a_8, b_6, b_8\}, \
\{a_4, a_5, b_4, a_6, b_2, a_7, b_5, a_8, b_7, b_6, b_8\}, \
\{a_4, a_5, b_4, a_6, a_7, b_2, b_5, a_8, b_7, b_8, b_6\}, \\
&\{a_4, a_5, b_4, a_6, b_2, a_7, b_5, a_8, b_7, b_8, b_6\}, \
\{a_4, a_5, b_4, b_2, a_6, a_7, b_5, a_8, b_6, b_7, b_8\}, \
\{a_4, a_5, b_4, b_2, a_6, a_7, b_5, a_8, b_6, b_8, b_7\}, \\  
&\{a_4, a_5, b_4, b_2, a_6, a_7, b_5, a_8, b_7, b_6, b_8\}, \
\{a_4, a_5, b_4, b_2, a_6, a_7, b_5, a_8, b_7, b_8, b_6\}, \
\{a_4, a_5, b_4, a_6, a_7, b_6, a_8, b_5, b_7, b_8, b_2\}, \\
&\{a_4, a_5, b_4, a_6, a_7, b_6, a_8, b_5, b_8, b_7, b_2\}, \
\{a_4, a_5, b_4, a_6, a_7, b_6, a_8, b_5, b_2, b_7, b_8\}, \
\{a_4, a_5, b_4, a_6, a_7, a_8, b_6, b_5, b_7, b_8, b_2\}, \\
&\{a_4, a_5, b_4, a_6, a_7, a_8, b_6, b_5, b_2, b_7, b_8\}, \
\{a_4, a_5, b_4, a_6, a_7, a_8, b_7, b_5, b_6, b_8, b_2\}, \
\{a_4, a_5, b_4, a_6, a_7, a_8, b_2, b_5, b_6, b_7, b_8\}, \\
&\{a_4, a_5, b_4, a_6, a_7, b_2, a_8, b_5, b_6, b_7, b_8\}, \
\{a_4, a_5, b_4, a_6, a_7, a_8, b_2, b_5, b_6, b_8, b_7\}, \
\{a_4, a_5, b_4, a_6, a_7, b_2, a_8, b_5, b_6, b_8, b_7\}, \\
&\{a_4, a_5, b_4, a_6, b_2, a_7, a_8, b_5, b_6, b_7, b_8\}, \
\{a_4, a_5, b_4, a_6, b_2, a_7, a_8, b_5, b_6, b_8, b_7\}, \
\{a_4, a_5, b_4, a_6, a_7, a_8, b_7, b_5, b_8, b_6, b_2\}, \\
&\{a_4, a_5, b_4, a_6, a_7, a_8, b_7, b_5, b_2, b_6, b_8\}, \
\{a_4, a_5, b_4, a_6, a_7, a_8, b_2, b_5, b_7, b_6, b_8\}, \
\{a_4, a_5, b_4, a_6, a_7, b_2, a_8, b_5, b_7, b_6, b_8\}, \\
&\{a_4, a_5, b_4, a_6, a_7, a_8, b_2, b_5, b_8, b_6, b_7\}, \
\{a_4, a_5, b_4, a_6, a_7, b_2, a_8, b_5, b_8, b_6, b_7\}, \
\{a_4, a_5, b_4, a_6, b_2, a_7, a_8, b_5, b_7, b_6, b_8\}, \\
&\{a_4, a_5, b_4, a_6, b_2, a_7, a_8, b_5, b_8, b_6, b_7\}, \
\{a_4, a_5, b_4, a_6, a_7, a_8, b_2, b_5, b_7, b_8, b_6\}, \
\{a_4, a_5, b_4, a_6, a_7, b_2, a_8, b_5, b_7, b_8, b_6\}, \\
&\{a_4, a_5, b_4, a_6, a_7, a_8, b_2, b_5, b_8, b_7, b_6\}, \
\{a_4, a_5, b_4, a_6, a_7, b_2, a_8, b_5, b_8, b_7, b_6\}, \
\{a_4, a_5, b_4, a_6, b_2, a_7, a_8, b_5, b_7, b_8, b_6\}, \\
&\{a_4, a_5, b_4, a_6, b_2, a_7, a_8, b_5, b_8, b_7, b_6\}, \
\{a_4, a_5, b_4, b_2, a_6, a_7, a_8, b_5, b_6, b_7, b_8\}, \
\{a_4, a_5, b_4, b_2, a_6, a_7, a_8, b_5, b_6, b_8, b_7\}, \\
&\{a_4, a_5, b_4, b_2, a_6, a_7, a_8, b_5, b_8, b_6, b_7\}, \
\{a_4, a_5, b_4, b_2, a_6, a_7, a_8, b_5, b_7, b_8, b_6\}, \
\{a_4, a_5, b_4, b_2, a_6, a_7, a_8, b_5, b_8, b_7, b_6\}, \\
&\{a_4, a_5, b_4, a_6, a_7, b_6, a_8, b_7, b_5, b_8, b_2\}, \
\{a_4, a_5, b_4, a_6, a_7, b_6, a_8, b_2, b_5, b_7, b_8\}, \
\{a_4, a_5, b_4, a_6, a_7, b_6, b_2, a_8, b_5, b_7, b_8\}, 
\end{align*}

\begin{align*}
&\text{$593$--$648$}\\
&\{a_4, a_5, b_4, a_6, a_7, b_6, a_8, b_2, b_5, b_8, b_7\}, \
\{a_4, a_5, b_4, a_6, a_7, b_6, b_2, a_8, b_5, b_8, b_7\}, \
\{a_4, a_5, b_4, a_6, b_2, b_6, a_7, a_8, b_5, b_7, b_8\}, \\
&\{a_4, a_5, b_4, a_6, b_2, b_6, a_7, a_8, b_5, b_8, b_7\}, \
\{a_4, a_5, b_4, a_6, a_7, a_8, b_6, b_7, b_5, b_8, b_2\}, \
\{a_4, a_5, b_4, a_6, a_7, a_8, b_6, b_8, b_5, b_7, b_2\}, \\
&\{a_4, a_5, b_4, a_6, a_7, a_8, b_6, b_2, b_5, b_7, b_8\}, \
\{a_4, a_5, b_4, a_6, a_7, b_2, b_6, a_8, b_5, b_7, b_8\}, \
\{a_4, a_5, b_4, a_6, a_7, a_8, b_6, b_2, b_5, b_8, b_7\}, \\
&\{a_4, a_5, b_4, a_6, a_7, b_2, b_6, a_8, b_5, b_8, b_7\}, \
\{a_4, a_5, b_4, a_6, b_2, a_7, b_6, a_8, b_5, b_7, b_8\}, \
\{a_4, a_5, b_4, a_6, b_2, a_7, b_6, a_8, b_5, b_8, b_7\}, \\
&\{a_4, a_5, b_4, a_6, a_7, a_8, b_7, b_6, b_5, b_8, b_2\}, \
\{a_4, a_5, b_4, a_6, a_7, a_8, b_7, b_8, b_5, b_6, b_2\}, \
\{a_4, a_5, b_4, a_6, a_7, a_8, b_7, b_2, b_5, b_6, b_8\}, \\
&\{a_4, a_5, b_4, a_6, a_7, a_8, b_7, b_2, b_5, b_8, b_6\}, \
\{a_4, a_5, b_4, a_6, a_7, b_2, b_7, a_8, b_5, b_8, b_6\}, \
\{a_4, a_5, b_4, a_6, a_7, b_6, b_7, a_8, b_2, b_5, b_8\}, \\
&\{a_4, a_5, b_4, a_6, a_7, b_6, a_8, b_7, b_8, b_5, b_2\}, \
\{a_4, a_5, b_4, a_6, a_7, b_6, a_8, b_7, b_2, b_5, b_8\}, \
\{a_4, a_5, b_4, a_6, a_7, a_8, b_6, b_7, b_8, b_5, b_2\}, \\
&\{a_4, a_5, b_4, a_6, a_7, a_8, b_6, b_7, b_2, b_5, b_8\}, \
\{a_4, a_5, b_4, a_6, a_7, a_8, b_6, b_8, b_7, b_5, b_2\}, \
\{a_4, a_5, b_4, a_6, a_7, a_8, b_6, b_8, b_2, b_5, b_7\}, \\
&\{a_4, a_5, b_4, a_6, a_7, a_8, b_7, b_6, b_2, b_5, b_8\}, \
\{a_4, a_5, b_4, a_6, a_7, a_8, b_7, b_8, b_6, b_5, b_2\}, \
\{a_4, a_5, a_6, b_4, b_5, b_6, a_7, a_8, b_2, b_7, b_8\}, \\
&\{a_4, a_5, a_6, b_4, b_5, a_7, b_6, a_8, b_2, b_7, b_8\}, \
\{a_4, a_5, a_6, b_4, b_5, a_7, a_8, b_6, b_8, b_7, b_2\}, \
\{a_4, a_5, a_6, b_4, b_5, a_7, a_8, b_6, b_2, b_7, b_8\}, \\
&\{a_4, a_5, a_6, b_4, b_5, a_7, a_8, b_7, b_6, b_8, b_2\}, \
\{a_4, a_5, a_6, b_4, b_5, a_7, a_8, b_2, b_6, b_7, b_8\}, \
\{a_4, a_5, a_6, b_4, b_5, a_7, b_2, a_8, b_6, b_7, b_8\}, \\
&\{a_4, a_5, a_6, b_4, b_5, a_7, a_8, b_2, b_6, b_8, b_7\}, \
\{a_4, a_5, a_6, b_4, b_5, a_7, a_8, b_7, b_8, b_6, b_2\}, \
\{a_4, a_5, b_2, b_4, b_5, a_6, a_7, a_8, b_7, b_6, b_8\}, \\
&\{a_4, a_5, a_6, b_4, a_7, b_5, b_6, a_8, b_2, b_7, b_8\}, \
\{a_4, a_5, a_6, b_4, a_7, b_5, a_8, b_6, b_8, b_7, b_2\}, \
\{a_4, a_5, a_6, b_4, a_7, b_5, a_8, b_6, b_2, b_7, b_8\}, \\
&\{a_4, a_5, a_6, b_4, a_7, b_5, a_8, b_7, b_6, b_8, b_2\}, \
\{a_4, a_5, a_6, b_4, a_7, b_5, a_8, b_2, b_6, b_7, b_8\}, \
\{a_4, a_5, a_6, b_4, a_7, b_5, b_2, a_8, b_6, b_7, b_8\}, \\
&\{a_4, a_5, a_6, b_4, a_7, b_5, a_8, b_2, b_6, b_8, b_7\}, \
\{a_4, a_5, a_6, b_4, a_7, b_5, b_2, a_8, b_6, b_8, b_7\}, \
\{a_4, a_5, a_6, b_4, a_7, b_5, a_8, b_7, b_8, b_6, b_2\}, \\
&\{a_4, a_5, a_6, b_4, a_7, a_8, b_5, b_6, b_7, b_8, b_2\}, \
\{a_4, a_5, a_6, b_4, a_7, a_8, b_5, b_6, b_8, b_7, b_2\}, \
\{a_4, a_5, a_6, b_4, a_7, a_8, b_5, b_6, b_2, b_7, b_8\}, \\
&\{a_4, a_5, a_6, b_4, a_7, a_8, b_5, b_2, b_6, b_7, b_8\}, \
\{a_4, a_5, a_6, b_4, a_7, a_8, b_5, b_7, b_8, b_6, b_2\}, \
\{a_4, a_5, a_6, b_4, a_7, a_8, b_5, b_7, b_2, b_6, b_8\}, \\
&\{a_4, a_5, a_6, b_4, a_7, a_8, b_5, b_8, b_7, b_6, b_2\}, \
\{a_4, a_5, a_6, b_4, a_7, a_8, b_2, b_5, b_6, b_7, b_8\}, \
\{a_4, a_5, a_6, b_4, a_7, b_2, a_8, b_5, b_6, b_7, b_8\}, \\
&\{a_4, a_5, b_2, b_4, a_6, a_7, a_8, b_5, b_6, b_7, b_8\}, \
\{a_4, a_5, a_6, b_5, a_7, a_8, b_6, b_7, b_8, b_4, b_2\} 
\end{align*}

}

\bigskip
\noindent
\underline{Type 3} \quad 
$649: [S(b_1, a_4, a_3, a_1, a_2), S(b_2, a_5, a_6, b_5, a_7, b_6, a_8, b_3, b_7, b_8, b_4)]$

\newpage
\section{The cocycle $C$}\label{append:C}


The cocycle $C$ is given as a linear combination of 
$C_1, C_2, \ldots, C_{647}$ in Appendix \ref{append:detector}. 
The explicit formula is as follows. 

\bigskip
{\footnotesize
\begin{align*}
& C=2160 C_{1
}-2616 C_{2
}-180 C_{3
}-384 C_{4
}-1956 C_{5
}-528 C_{6
}+1596 C_{7
}-2100 C_{8} \\
& +1524 C_{9
}-240 C_{10
}-1812 C_{11
}-1020 C_{12
}+132 C_{13
}+1344 C_{14
}+1584 C_{15
}-2520 C_{16} \\
& +2400 C_{17
}-156 C_{18
}-1224 C_{19
}-1020 C_{20
}-4236 C_{21
}-1164 C_{22
}-12 C_{23
}-4140 C_{24} \\
& -1224 C_{25
}-7140 C_{26
}-1032 C_{27
}-7428 C_{28
}-840 C_{29
}-2736 C_{30
}-3228 C_{31
}-936 C_{32} \\
& -2736 C_{33
}-4680 C_{34
}+496 C_{35
}+524 C_{36
}+3780 C_{37
}+4028 C_{38
}-788 C_{39
}+1264 C_{40} \\
& -3384 C_{41
}-1076 C_{42
}+516 C_{43
}+372 C_{44
}-1664 C_{45
}+3000 C_{46
}+1980 C_{47
}+756 C_{48} \\
& +4672 C_{49
}-5768 C_{50
}-3204 C_{51
}+3072 C_{52
}-1540 C_{53
}-6316 C_{54
}-6268 C_{55
}-9808 C_{56} \\
& -3476 C_{57
}-7832 C_{58
}-8640 C_{59
}-13092 C_{60
}+4516 C_{61
}+5060 C_{62
}+8092 C_{63
}-848 C_{64} \\
& +2976 C_{65
}+1992 C_{66
}+588 C_{67
}+772 C_{68
}+1088 C_{69
}+244 C_{70
}+5824 C_{71
}+4860 C_{72} \\
& +2136 C_{73
}+2280 C_{74
}-1296 C_{75
}+664 C_{76
}-3260 C_{77
}-4436 C_{78
}+916 C_{79
}-7724 C_{80} \\
& -2296 C_{81
}+6256 C_{82
}+9744 C_{83
}-1780 C_{84
}+4808 C_{85
}+4796 C_{86
}-700 C_{87
}+1284 C_{88} \\
& +3784 C_{89
}+5580 C_{90
}+7172 C_{91
}+4832 C_{92
}+4832 C_{93
}+5168 C_{94
}+84 C_{95
}+564 C_{96} \\
& +2088 C_{97
}+1980 C_{98
}-804 C_{99
}-1756 C_{100
}-692 C_{101
}+2928 C_{102
}-2496 C_{103
}-1344 C_{104} \\
& +516 C_{105
}+2532 C_{106
}-78876 C_{107
}+3684 C_{108
}-234400 C_{109
}+3612 C_{110
}-49828 C_{111
}-4452 C_{112} \\
& -122892 C_{113
}+696 C_{114
}-122436 C_{115
}-10032 C_{116
}+10844 C_{117
}-26378 C_{118
}-20362 C_{119
}+924 C_{120} \\
& -37936 C_{121
}+2172 C_{122
}-179514 C_{123
}+792 C_{124
}+3792 C_{125
}-409286 C_{126
}+1740 C_{127
}-215036 C_{128} \\
& -9924 C_{129
}-499622 C_{130
}+20316 C_{131
}-1920 C_{132
}+27168 C_{133
}-553120 C_{134
}+1812 C_{135
}-8440 C_{136} \\
& -3996 C_{137
}-11222 C_{138
}-9432 C_{139
}-114290 C_{140
}+960 C_{141
}-15018 C_{142
}-4944 C_{143
}-349734 C_{144} \\
& -8724 C_{145
}-196928 C_{146
}-9636 C_{147
}-368506 C_{148
}+16476 C_{149
}+22116 C_{150
}+14304 C_{151
}-624962 C_{152} \\
& +7548 C_{153
}+81792 C_{154
}-1716 C_{155
}+19188 C_{156
}-124728 C_{157
}+20304 C_{158
}+8712 C_{159
}+8948 C_{160} \\
& -7248 C_{161
}+7080 C_{162
}-495912 C_{163
}+2964 C_{164
}+3060 C_{165
}+14928 C_{166
}+204 C_{167
}-392446 C_{168} \\
& -11388 C_{169
}-23126 C_{170
}+2532 C_{171
}-97280 C_{172
}+4656 C_{173
}-201028 C_{174
}-43634 C_{175
}+1056 C_{176} \\
& -83688 C_{177
}-5844 C_{178
}-188572 C_{179
}+1752 C_{180
}-112556 C_{181
}+44288 C_{182
}-10416 C_{183
}+5100 C_{184} \\
& +288 C_{185
}+288 C_{186
}-153500 C_{187
}-188918 C_{188
}+8916 C_{189
}-19486 C_{190
}+17424 C_{191
}-119176 C_{192} \\
& +8340 C_{193
}+18612 C_{194
}-390946 C_{195
}+17580 C_{196
}-138656 C_{197
}+13176 C_{198
}-388138 C_{199
}+43860 C_{200} \\
& +6216 C_{201
}+42864 C_{202
}-412368 C_{203
}+25356 C_{204
}+34612 C_{205
}+4500 C_{206
}-25844 C_{207
}-6192 C_{208} \\
& -154118 C_{209
}-10992 C_{210
}-29418 C_{211
}-15360 C_{212
}-297790 C_{213
}+11472 C_{214
}-21748 C_{215
}+13368 C_{216} \\
& -145026 C_{217
}+13428 C_{218
}+28258 C_{219
}-8664 C_{220
}+130460 C_{221
}-25164 C_{222
}-428252 C_{223
}-20232 C_{224} \\
& +440158 C_{225
}-204 C_{226
}+18192 C_{227
}-118308 C_{228
}+4812 C_{229
}-17472 C_{230
}-51790 C_{231
}-1344 C_{232} \\
& +4752 C_{233
}+15336 C_{234
}-325702 C_{235
}-5652 C_{236
}-18264 C_{237
}-39024 C_{238
}-266390 C_{239
}-35304 C_{240} \\
& +233132 C_{241
}-12042 C_{242
}-5244 C_{243
}-45756 C_{244
}+3612 C_{245
}-244776 C_{246
}+18230 C_{247
}+8712 C_{248} \\
& +1932 C_{249
}-62988 C_{250
}-8436 C_{251
}-5952 C_{252
}-150596 C_{253
}-3864 C_{254
}-2136 C_{255
}-125024 C_{256} \\
& +1992 C_{257
}+1728 C_{258
}-38492 C_{259
}-194014 C_{260
}-2460 C_{261
}+28500 C_{262
}-4728 C_{263
}-37756 C_{264} \\
& +4536 C_{265
}-155092 C_{266
}+1836 C_{267
}-325192 C_{268
}-116450 C_{269
}+2244 C_{270
}-223856 C_{271
}+876 C_{272} \\
& +1260 C_{273
}-3264 C_{274
}-350994 C_{275
}-8196 C_{276
}+74408 C_{277
}-187996 C_{278
}+2772 C_{279
}-5054 C_{280} 
\end{align*}

\begin{align*}
& +1260 C_{281
}-97248 C_{282
}-3732 C_{283
}-183678 C_{284
}-7920 C_{285
}-271274 C_{286
}+8160 C_{287
}+42640 C_{288} \\
& +19236 C_{289
}-4118 C_{290
}+16740 C_{291
}-180 C_{292
}-16500 C_{293
}-299976 C_{294
}-16776 C_{295
}+431810 C_{296} \\
& -15576 C_{297
}-13080 C_{298
}-256796 C_{299
}+1248 C_{300
}-12984 C_{301
}-209848 C_{302
}-27468 C_{303
}-31896 C_{304} \\
& -352402 C_{305
}-34152 C_{306
}+3024 C_{307
}-492 C_{308
}-20448 C_{309
}-366528 C_{310
}-22128 C_{311
}-183650 C_{312} \\
& +13392 C_{313
}+11052 C_{314
}+15660 C_{315
}+11340 C_{316
}-11700 C_{317
}-9540 C_{318
}-12744 C_{319
}-14664 C_{320} \\
& -16692 C_{321
}-288 C_{322
}-144 C_{323
}-1572 C_{324
}-4860 C_{325
}-158448 C_{326
}-2976 C_{327
}-10128 C_{328} \\
& -60114 C_{329
}-1152 C_{330
}-9816 C_{331
}-479106 C_{332
}-20484 C_{333
}-3156 C_{334
}-11148 C_{335
}-20892 C_{336} \\
& -258324 C_{337
}-34692 C_{338
}-144 C_{339
}-2940 C_{340
}+4956 C_{341
}-219758 C_{342
}-5244 C_{343
}-27420 C_{344} \\
& -58968 C_{345
}+3144 C_{346
}+3900 C_{347
}-372174 C_{348
}-780 C_{349
}+3528 C_{350
}-8352 C_{351
}-28008 C_{352} \\
& -209924 C_{353
}-30576 C_{354
}-19200 C_{355
}-18516 C_{356
}-208418 C_{357
}+2784 C_{358
}+312 C_{359
}-18420 C_{360} \\
& -207992 C_{361
}+2892 C_{362
}-31080 C_{363
}-37392 C_{364
}-145046 C_{365
}-49656 C_{366
}+3624 C_{367
}-5040 C_{368} \\
& -29568 C_{369
}-88052 C_{370
}-37632 C_{371
}-78438 C_{372
}-187038 C_{373
}+160810 C_{374
}+6990 C_{375
}-37872 C_{376} \\
& -43702 C_{377
}+182330 C_{378
}+56778 C_{379
}-11448 C_{380
}-250466 C_{381
}-22364 C_{382
}-121402 C_{383
}+43126 C_{384} \\
& -4968 C_{385
}-2758 C_{386
}+265038 C_{387
}+231260 C_{388
}+288100 C_{389
}+3240 C_{390
}-18324 C_{391
}+194794 C_{392} \\
& +283548 C_{393
}+515637 C_{394
}-2316 C_{395
}+323536 C_{396
}+481808 C_{397
}-51474 C_{398
}-27764 C_{399
}+720 C_{400} \\
& -73830 C_{401
}+184948 C_{402
}+219794 C_{403
}-6576 C_{404
}-125438 C_{405
}+547099 C_{406
}+373157 C_{407
}+71732 C_{408} \\
& +165186 C_{409
}-28358 C_{410
}+294458 C_{411
}+10008 C_{412
}+5328 C_{413
}-25298 C_{414
}+89932 C_{415
}-121304 C_{416} \\
& +322458 C_{417
}-35738 C_{418
}+202833 C_{419
}-31276 C_{420
}+726792 C_{421
}+9972 C_{422
}-2760 C_{423
}-94436 C_{424} \\
& +537527 C_{425
}-94412 C_{426
}+767108 C_{427
}+8400 C_{428
}-6624 C_{429
}+274923 C_{430
}+499389 C_{431
}-47262 C_{432} \\
& +184238 C_{433
}+5520 C_{434
}+204 C_{435
}+8414 C_{436
}+77216 C_{437
}-100388 C_{438
}+229320 C_{439
}-5532 C_{440} \\
& +392222 C_{441
}-1212 C_{442
}+10068 C_{443
}-6012 C_{444
}-169735 C_{445
}-79048 C_{446
}+346939 C_{447
}-85488 C_{448} \\
& +584211 C_{449
}+1464 C_{450
}-4620 C_{451
}-24846 C_{452
}+237574 C_{453
}-46686 C_{454
}-197994 C_{455
}+121016 C_{456} \\
& +65142 C_{457
}-100316 C_{458
}-98460 C_{459
}+40844 C_{460
}-456 C_{461
}-5976 C_{462
}-85866 C_{463
}-14984 C_{464} \\
& -62636 C_{465
}-120454 C_{466
}-16498 C_{467
}+5076 C_{468
}+2868 C_{469
}-25334 C_{470
}+27912 C_{471
}+145006 C_{472} \\
& +135304 C_{473
}-546915 C_{474
}-2172 C_{475
}-45760 C_{476
}+1080 C_{477
}-4764 C_{478
}-67712 C_{479
}+72894 C_{480} \\
& -48172 C_{481
}+106442 C_{482
}-26852 C_{483
}+6792 C_{484
}-564 C_{485
}-54540 C_{486
}+65725 C_{487
}-78148 C_{488} \\
& -45105 C_{489
}-56430 C_{490
}-196792 C_{491
}+6420 C_{492
}+4344 C_{493
}+16046 C_{494
}+3506 C_{495
}+518556 C_{496} \\
& +14436 C_{497
}+8448 C_{498
}-202292 C_{499
}-120572 C_{500
}-106874 C_{501
}-167120 C_{502
}-457738 C_{503
}-144 C_{504} \\
& -43668 C_{505
}+180350 C_{506
}+648 C_{507
}-4176 C_{508
}+8546 C_{509
}+49202 C_{510
}-54142 C_{511
}+290692 C_{512} \\
& +481507 C_{513
}+7296 C_{514
}+528 C_{515
}+556337 C_{516
}+675051 C_{517
}+11832 C_{518
}+5772 C_{519
}+236561 C_{520} \\
& +554071 C_{521
}-28810 C_{522
}-88202 C_{523
}+6432 C_{524
}+3732 C_{525
}+19202 C_{526
}+53332 C_{527
}+81882 C_{528} \\
& +118434 C_{529
}+18048 C_{530
}+7872 C_{531
}-71070 C_{532
}-152514 C_{533
}-837964 C_{534
}+14904 C_{535
}+12516 C_{536} \\
& -48792 C_{537
}+139934 C_{538
}+333706 C_{539
}+195122 C_{540
}-5550 C_{541
}+2520 C_{542
}-9334 C_{543
}+12558 C_{544} \\
& +293278 C_{545
}-9900 C_{546
}-34994 C_{547
}+192956 C_{548
}-25094 C_{549
}+589686 C_{550
}+2120 C_{551
}-15360 C_{552} \\
& -145954 C_{553
}-54654 C_{554
}-49338 C_{555
}-46614 C_{556
}-16744 C_{557
}+705212 C_{558
}-240 C_{559
}-136382 C_{560} \\
& +66116 C_{561
}-9728 C_{562
}-4672 C_{563
}-11484 C_{564
}-23148 C_{565
}+14558 C_{566
}+10302 C_{567
}+239250 C_{568} \\
& -15864 C_{569
}+417133 C_{570
}-22538 C_{571
}+77869 C_{572
}-5844 C_{573
}-171993 C_{574
}+201203 C_{575
}+8368 C_{576} \\
& +60594 C_{577
}+99130 C_{578
}-48346 C_{579
}-52546 C_{580
}-273972 C_{581
}-230570 C_{582
}-905942 C_{583
}-214963 C_{584} \\
& +217439 C_{585
}+145604 C_{586
}-12708 C_{587
}+306690 C_{588
}+10198 C_{589
}+137228 C_{590
}+346058 C_{591
}+186122 C_{592} \\
& +158174 C_{593
}+31594 C_{594
}+157286 C_{595
}+270234 C_{596
}+295582 C_{597
}-78596 C_{598
}+123409 C_{599
}-671038 C_{600} 
\end{align*}

\begin{align*}
& +29136 C_{601
}+28520 C_{602
}+33346 C_{603
}-17632 C_{604
}+225875 C_{605
}+48128 C_{606
}-79590 C_{607
}+65096 C_{608} \\
& -94964 C_{609
}+149694 C_{610
}+486422 C_{611
}+506574 C_{612
}-85370 C_{613
}+480920 C_{614
}+3720 C_{615
}+798026 C_{616} \\
& +201043 C_{617
}+313962 C_{618
}+292500 C_{619
}-11268 C_{620
}+107479 C_{621
}+590932 C_{622
}-171482 C_{623
}+967553 C_{624} \\
& -61746 C_{625
}+783790 C_{626
}+65073 C_{627
}+111508 C_{628
}+1202622 C_{629
}+30466 C_{630
}-163486 C_{631
}+1039771 C_{632} \\
& +30178 C_{633
}+811337 C_{634
}+138379 C_{635
}-37184 C_{636
}-35378 C_{637
}+55370 C_{638
}-168724 C_{639
}-429016 C_{640} \\
& -79884 C_{641
}-155454 C_{642
}+7584 C_{643
}-231631 C_{644
}-485137 C_{645
}-39500 C_{646
}-658713 C_{647}
\end{align*}

}

\bibliographystyle{amsplain}

\end{document}